\newcommand{\Ker}{\operatorname{Ker}}
\newcommand{\id}{\operatorname{id}}
\newcommand{\ev}{\operatorname{ev}}
\renewcommand{\Im}{\operatorname{Im}}
   \theoremstyle{plain}
   \newtheorem{thm}{Theorem}[section]
   \newtheorem{prop}[thm]{Proposition}
   \newtheorem{lem}[thm]{Lemma}
   \theoremstyle{definition}
   \newtheorem{defn}[thm]{Definition}
   \theoremstyle{remark}
   \newtheorem{remark}[thm]{Remark}
 \numberwithin{equation}{section}
\author{V. Manuilov}
\date{}
\address{Moscow State University,
Leninskie Gory 1, Moscow, 
119991, Russia}
\email{manuilov@mech.math.msu.su}
\title{A $KK$-like picture for $E$-theory of $C^*$-algebras}
\begin{document}

\maketitle

\begin{abstract}
Let $A$, $B$ be separable $C^*$-algebras, $B$ stable. Elements of the $E$-theory group $E(A,B)$ are represented by asymptotic homomorphisms from the second suspension of $A$ to $B$. Our aim is to represent these elements by (families of) maps from $A$ itself to $B$. We have to pay for that by allowing these maps to be even further from $*$-homomorphisms. We prove that $E(A,B)$ can be represented by pairs $(\varphi^+,\varphi^-)$ of maps from $A$ to $B$, which are not necessarily asymptotic homomorphisms, but have the same deficiency from being ones. Not surprisingly, such pairs of maps can be viewed as pairs of asymptotic homomorphisms from some $C^*$-algebra $C$ that surjects onto $A$, and the two maps in a pair should agree on the kernel of this surjection. We give examples of full surjections $C\to A$, i.e. those, for which all classes in $E(A,B)$ can be obtained from pairs of asymptotic homomorphisms from $C$.

\end{abstract}

\section*{Introduction}

There are several pictures for Kasparov's $KK$-theory of $C^*$-algebras \cite{Kasparov}, \cite{Cuntz-quasi}, \cite{Blackadar-book}, \cite{Thomsen-book}, \cite{Thomsen-Duke} each emphasizing one of the faces of $KK$-cycles, i.e. of representatives of $KK$-classes. One of the reasons behind this diversity is lack of homomorphisms between $C^*$-algebras. Any $*$-homomorphism $\varphi:A\to B$ represents an element in $KK(A,B)$, but if one wants to represent arbitrary element of $KK(A,B)$ then one has to use generalized morphisms. For example, the Cuntz picture of $KK$-theory uses the  pairs $(\varphi^+,\varphi^-)$ of $*$-homomorphisms from $A$ to the multiplier algebra $MB$ of $B$ such that $\varphi^+(a)-\varphi^-(a)\in B$ for any $a\in A$ \cite{Cuntz-quasi}. 

$E$-theory of Connes and Higson \cite{Connes-Higson} shares the same problem: one has to use asymptotic homomorphisms in place of $*$-homomorphisms. Moreover, the asymptotic homomorphisms that represent elements in $E(A,B)$ have domain not in $A$ but in the suspension $SA$ over $A$ (or, even in the second suspension), and take values (if considered as $*$-homomorphisms) in $C_b(\mathbb R_+;B)/C_0(\mathbb R_+;B)$, where $\mathbb R_+=[0,\infty)$, and $C_b(\mathbb R_+;B)$ denotes the algebra of bounded $B$-valued functions on $\mathbb R_+$. 

Our aim is to represent elements of $E(A,B)$ by pairs of maps from $A$ itself to $C_b(\mathbb R_+;B)/C_0(\mathbb R_+;B)$ similarly to the Cuntz picture for $KK$-theory. We have to pay for that by allowing these maps to be even further from $*$-homomorphisms. We shall work with pairs of maps, which should be continuous, but may be not linear or not asymptotically linear, and we require that the two maps in a pair should have the same deficiency from being an asymptotic homomorphism. Namely, we show that elements of $E(A,B)$ can be represented by pairs $(\varphi^+,\varphi^-)$ of maps from $A$ to $C_b(\mathbb R_+;B)/C_0(\mathbb R_+;B)$ that satisfy
$$
M^+(a,b)\varphi^+(c_1)\cdots\varphi^+(c_n)=M^-(a,b)\varphi^-(c_1)\cdots\varphi^-(c_n)
$$  
for any $n=0,1,2,\ldots$ and any $a,b,c_0,c_1,\ldots,c_n\in A$, where $M^\pm(a,b)$ stands for either $\varphi^\pm(ab)-\varphi^\pm(a)\varphi^\pm(b)$ or $\varphi^\pm(a+b)-\varphi^\pm(a)-\varphi^\pm(b)$. Isomorphism between $E(A,B)$ and the group of homotopy classes of such pairs is proved using the explicit form of the Bott isomorphism obtained in \cite{MT}.

Not surprisingly, such pairs of maps can be viewed as pairs of asymptotic homomorphisms from some $C^*$-algebra $C$ that surjects onto $A$, and the two maps in a pair should agree on the kernel of this surjection. We give examples of full surjections $C\to A$, i.e. those, for which all classes in $E(A,B)$ can be obtained from pairs of asymptotic homomorphisms from $C$.

Note that in topological applications, $A$ is often a group $C^*$-algebra of a discrete group $G$, or its augmentation ideal. 
Our interest in representing $E$-theory classes by maps from $A$ instead of suspensions over $A$ is caused by our hope that group actions may give more direct constructions of these classes. 

Special cases $A=\mathbb C$ or $B=\mathbb K$ (the $C^*$-algebra of compact operators) were considered earlier in \cite{M-PJM} and in \cite{M-RJMP} respectively.

Similar approach can be used in $KK$-theory. We shall describe it in a separate paper.

\section{Asymptotic $KK$-cycles}

Let $A$, $B$ be $C^*$-algebras, $A$ separable, $B$ stable and $\sigma$-unital, $MB$ the multiplier algebra for $B$. Let $C_b(\mathbb R_+;MB)$ (resp. $C_{b,s}(\mathbb R_+;MB)$) denote the algebras of $MB$-valued functions on $\mathbb R_+=[0,\infty)$ continuous with respect to the norm (resp. to the strict) topology. For $t\in\mathbb R_+$, $\ev_t$ denotes the evaluation at $t$.

Let $C$ be a $C^*$-algebra (usually $C$ is either $C_{b,s}(\mathbb R_+;MB)$ or $C_b(\mathbb R_+;B)$).
Let $(\Phi^+,\Phi^-)$ be a pair of (not necessarily linear) continuous maps $\Phi^\pm:A\to C$. We denote by $E=E(\Phi^+,\Phi^-)=C^*(\Phi^+(A),\Phi^-(A))\subset C$ the $C^*$-algebra generated by all $\Phi^\pm(a)$, $a\in A$, and by $D=D(\Phi^+,\Phi^-)\subset E$ the ideal, in $E$, generated by $\Phi^+(a)-\Phi^-(a)$, $a\in A$.

For $x,y\in C_{b,s}(\mathbb R_+;MB)$ we write $x\sim y$ if $x-y\in C_0(\mathbb R_+;MB)$, i.e. if $\lim_{t\to\infty}x_t-y_t=0$. 

\begin{defn}\label{Def1}
A pair $(\Phi^+,\Phi^-)$ of continuous maps $\Phi^\pm:A\to C_b(\mathbb R_+;B)$ is a {\it compact asymptotic $KK$-cycle} from $A$ to $B$ if
\begin{itemize}
\item[(a1)]
$\Phi^\pm$ are $\mathbb C$-homogeneous and asymptotically involutive, i.e. $\Phi^\pm(\lambda a)=\lambda\Phi^\pm(a)$ for any $\lambda\in\mathbb C$ and for any $a\in A$, and $\Phi^\pm(a^*)-\Phi^\pm(a)^*\in C_0(\mathbb R_+;B)$ for any $a\in A$;
\item[(a2)]
$(\Phi^\pm(a+b)-\Phi^\pm(a)-\Phi^\pm(b))x\in C_0(\mathbb R_+;B)$ and $(\Phi^\pm(ab)-\Phi^\pm(a)\Phi^\pm(b))x\in C_0(\mathbb R_+;B)$ 
for any $a,b\in A$ and for any $x\in D$.
\end{itemize}

\end{defn}

Let $M^\pm(a,b)$ denote either $\Phi^\pm(a+b)-\Phi^\pm(a)-\Phi^\pm(b)$ or $\Phi^\pm(ab)-\Phi^\pm(a)\Phi^\pm(b)$. Note that (a2) is equivalent to 
$$
M^\pm(a,b)\Phi^+(c_1)\cdots\Phi^+(c_n)(\Phi^+(d)-\Phi^-(d))\in C_0(\mathbb R_+;B),
$$
or, equivalently,
\begin{equation}\label{as}
\lim_{t\to\infty}M^\pm_t(a,b)\Phi^+_t(c_1)\cdots\Phi^+_t(c_n)(\Phi^+_t(d)-\Phi^-_t(d))=0,
\end{equation}
for any $a,b,c_1,\ldots,c_n,d\in A$ and any $n=0,1,2,\ldots$.

\begin{defn}\label{Def2}
A pair $(\Phi^+,\Phi^-)$ of continuous maps $\Phi^\pm:A\to C_b(\mathbb R_+;B)$ {\it has the same deficiency from being a homomorphism} if 
\begin{itemize}
\item[(a1)]
$\Phi^\pm$ are $\mathbb C$-homogeneous and asymptotically involutive;
\item[(a3)]
$M^+(a,b)\Phi^+(c_1)\cdots\Phi^+(c_n)-M^-(a,b)\Phi^-(c_1)\cdots\Phi^-(c_n)\in C_0(\mathbb R_+;B)$
\newline
for any $n=0,1,2,\ldots$ and any $a,b,c_1,\ldots,c_n\in A$.
\end{itemize}
\end{defn}

\begin{prop}
Definitions \ref{Def1} and \ref{Def2} are equivalent.

\end{prop}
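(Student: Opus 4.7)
The plan is to pass to the quotient $Q_C := C_b(\mathbb R_+;B)/C_0(\mathbb R_+;B)$ and compare the two conditions there. Write $\pi$ for the quotient map, and set $\tilde\Phi^\pm := \pi\circ\Phi^\pm$, $\mu^\pm(a,b):=\pi(M^\pm(a,b))$, $\tilde D:=\pi(D)$. In this language (a2) reads $\mu^\pm(a,b)\cdot\tilde D=0$ for all $a,b\in A$, while (a3) reads $\mu^+(a,b)\tilde\Phi^+(c_1)\cdots\tilde\Phi^+(c_n) = \mu^-(a,b)\tilde\Phi^-(c_1)\cdots\tilde\Phi^-(c_n)$ in $Q_C$ for all $n\ge 0$.

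For (a3) $\Rightarrow$ (a2): the $n=0$ case of (a3) gives $\mu:=\mu^+=\mu^-$. For each $n\ge 0$, subtracting (a3) at $(c_1,\dots,c_n)$ right-multiplied by $\tilde\Phi^-(d)$ from (a3) at $(c_1,\dots,c_n,d)$ yields $\mu\,\tilde\Phi^+(c_1)\cdots\tilde\Phi^+(c_n)\bigl(\tilde\Phi^+(d)-\tilde\Phi^-(d)\bigr)=0$, and the symmetric subtraction (using $\tilde\Phi^+(d)$) gives the analogous identity with $\tilde\Phi^-$'s in the middle. Using $\tilde\Phi^-(a)=\tilde\Phi^+(a)-(\tilde\Phi^+(a)-\tilde\Phi^-(a))$ and inducting on the number of $\tilde\Phi^-$ letters extends this to $\mu\,\tilde w\bigl(\tilde\Phi^+(d)-\tilde\Phi^-(d)\bigr)=0$ for every word $\tilde w$ in $\tilde\Phi^\pm$. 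Right-multiplying by arbitrary $e'\in\tilde E$ and passing to closed linear spans then give $\mu\cdot\tilde D=0$, i.e.\ (a2).

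For (a2) $\Rightarrow$ (a3), the critical step is $\mu^+=\mu^-$. Writing $\nu(a,b):=\mu^+(a,b)-\mu^-(a,b)$, one has $\nu(a,b)\in\tilde D$ because $M^+(a,b)-M^-(a,b)$ is a sum of differences $\Phi^+(x)-\Phi^-(x)$; hence (a2) yields $\mu^\sigma(a',b')\nu(a,b)=0$ for all $a',b'\in A$ and $\sigma\in\{+,-\}$, whence $\nu(a',b')\nu(a,b)=0$ as well. Asymptotic involutivity from (a1) gives $\nu(a,b)^*=\nu(a^*,b^*)$ in the additive case and $\nu(a,b)^*=\nu(b^*,a^*)$ in the multiplicative case; either way $\nu(a,b)^*\nu(a,b)$ has the form $\nu(a',b')\nu(a,b)$ and is therefore zero, and the $C^*$-identity forces $\nu(a,b)=0$. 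With $\mu^+=\mu^-=:\mu$ in hand, the higher-$n$ cases of (a3) follow since the telescoping identity $\tilde\Phi^+(c_1)\cdots\tilde\Phi^+(c_n)-\tilde\Phi^-(c_1)\cdots\tilde\Phi^-(c_n)=\sum_{i=1}^n\tilde\Phi^+(c_1)\cdots(\tilde\Phi^+(c_i)-\tilde\Phi^-(c_i))\cdots\tilde\Phi^-(c_n)$ places the difference in $\tilde D$, where $\mu$ annihilates it by (a2).

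The main obstacle is the step $\mu^+=\mu^-$ above: purely algebraic use of (a2) yields only $(\mu^+-\mu^-)^2=0$, which in a general $C^*$-algebra does not force vanishing. Clause (a1), otherwise passive in the argument, enters precisely to identify $\nu(a,b)^*$ with another instance $\nu(a',b')$ that is still annihilated by $\nu(a,b)$, after which the $C^*$-identity $\|\nu\|^2=\|\nu^*\nu\|$ closes the argument.
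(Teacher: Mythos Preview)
Your proof is correct and follows essentially the same strategy as the paper: for (a3)$\Rightarrow$(a2) you subtract consecutive $n$-cases to kill words ending in a difference, and for (a2)$\Rightarrow$(a3) you show $\nu:=\mu^+-\mu^-\in\tilde D$, then use (a2) together with the $C^*$-identity to force $\nu=0$.

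One small remark: in the critical step you invoke (a1) to identify $\nu(a,b)^*$ explicitly with another $\nu(a',b')$. The paper's argument is slightly more economical here: since $D$ is a closed two-sided ideal in the $C^*$-algebra $E$, it is automatically $*$-closed, so $(m_+-m_-)^*\in D$ immediately, and (a2) gives $(m_+-m_-)(m_+-m_-)^*\sim 0$ without any appeal to (a1). Your route works, but your closing comment that (a1) ``enters precisely'' at this point overstates its role --- it can be bypassed entirely. Also, your justification that $M^+(a,b)-M^-(a,b)$ is ``a sum of differences $\Phi^+(x)-\Phi^-(x)$'' is literally correct only in the additive case; in the multiplicative case one needs the telescoping $\Phi^+(a)\Phi^+(b)-\Phi^-(a)\Phi^-(b)=(\Phi^+(a)-\Phi^-(a))\Phi^+(b)+\Phi^-(a)(\Phi^+(b)-\Phi^-(b))$ and the fact that $D$ is an ideal, as the paper spells out.
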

\begin{proof}
We write $x\sim y$ if $x-y\in C_0(\mathbb R_+;B)$.
It follows from  
$$
M^+(a,b)\sim M^-(a,b) \quad (\mbox{this\ is\ (a3)\ for\ }n=0)
$$
and
$$ 
M^+(a,b)\Phi^+(c)\sim M^-(a,b)\Phi^-(c) \quad (\mbox{this\ is\ (a3)\ for\ }n=1) 
$$
that $M^\pm(a,b)(\Phi^+(c)-\Phi^-(c))\sim 0$.
Similarly, 
$$
M^+(a,b)\Phi^+(c_1)\cdots\Phi^+(c_n)\sim M^-(a,b)\Phi^-(c_1)\cdots\Phi^-(c_n)
$$ 
and
$$
M^+(a,b)\Phi^+(c_1)\cdots\Phi^+(c_{n+1})\sim M^-(a,b)\Phi^-(c_1)\cdots\Phi^-(c_{n+1})
$$
imply $M^\pm(a,b)\Phi^\pm(c_1)\cdots\Phi^\pm(c_n)(\Phi^+(c_{n+1})-\Phi^-(c_{n+1}))\sim 0$. Any $x\in D$ can be approximated by finite sums of products $\Phi^+(c_1)\cdots\Phi^+(c_n)(\Phi^+(c_{n+1})-\Phi^-(c_{n+1}))e$, with $e\in E$, hence (a2) follows from (a3).

To prove the opposite, note that 
$$
\Phi^+(a)\Phi^+(b)-\Phi^-(a)\Phi^-(b)=(\Phi^+(a)-\Phi^-(a))\Phi^+(b)+\Phi^-(a)(\Phi^+(b)-\Phi^-(b))\in D,
$$
hence $M^+(a,b)-M^-(a,b)\in D$. Passing to adjoints, $M^+(a,b)^*-M^-(a,b)^*\in D$. For shortness' sake set $m_\pm=M^\pm(a,b)$. Since $m_\pm x\sim 0$ for $x=m_+^*-m_-^*\in D$, we have $(m_+-m_-)(m_+-m_-)^*\sim 0$. Then, by the $C^*$-property of the norm, we conclude that $m_+\sim m_-$, i.e. that 
$M^+(a,b)\sim M^-(a,b)$. Now, it is easy to check that (a3) holds for any $n$.

\end{proof}

\begin{remark}
It suffices to check (a3) in Definition \ref{Def2} only for $n=0$ and $n=1$:
$$
M^\pm(a,b)\Phi^+(c_1)\Phi^+(c_2)(\Phi^+(d)-\Phi^-(d))
$$
$$
=M^\pm(a,b)M^+(c_1,c_2)(\Phi^+(d)-\Phi^-(d))+M^\pm(a,b)\Phi^+(c_1c_2)(\Phi^+(d)-\Phi^-(d)).
$$

\end{remark}

\begin{defn}
A pair $(\Phi^+,\Phi^-)$ of continuous maps $\Phi^\pm:A\to C_{b,s}(\mathbb R_+;MB)$ is an {\it asymptotic $KK$-cycle} from $A$ to $B$ if
\begin{itemize}
\item[(a1)]
$\Phi^\pm$ are $\mathbb C$-homogeneous and asymptotically involutive;
\item[(b2)]
$D\subset C_b(\mathbb R_+;B)$;
\item[(b3)]
$(\Phi(a+b)-\Phi(a)-\Phi(b))x\in C_0(\mathbb R_+;B)$ and $(\Phi(ab)-\Phi(a)\Phi(b))x\in C_0(\mathbb R_+;B)$ 
for any $a,b\in A$ and for any $x\in D$.
\end{itemize}

\end{defn}

Two (compact) asymptotic $KK$-cycles $(\Phi^+_0,\Phi^-_0)$ and $(\Phi^+_1,\Phi^-_1)$ from $A$ to $B$ are homotopic if there is a (compact) asymptotic $KK$-cycle $(\Psi^+,\Psi^-)$ from $A$ to $IB=C([0,1];B)$ such that the obvious evaluation maps at 0 and at 1 give $(\Phi^+_0,\Phi^-_0)$ and $(\Phi^+_1,\Phi^-_1)$.

\begin{lem}\label{homotopy-property}
Let $\Psi^\pm_{t,\tau}:A\to MB$, $(t,\tau)\in\mathbb R_+\times[0,1]$, be a family of continuous maps. 
The following properties are equivalent:
\begin{enumerate}
\item
The pair of maps $\Psi^\pm:A\times\mathbb R_+\times[0,1]\to MB$ is an asymptotic $KK$-cycle from $A$ to $IB$;
\item
The maps $\Psi^\pm_{t,\tau}$ satisfy:
\begin{itemize}
\item[(h1)]
$\Psi^\pm_{t,\tau}$ are homogeneous and $\Psi^\pm_{t,\tau}(a^*)-\Psi^\pm_{t,\tau}(a)\in C_0(\mathbb R_+;IB)=C_0(\mathbb R_+\times[0,1];B)$ for any $a\in A$;
\item[(h2)]
$\Psi^\pm_{t,\tau}(a)\in C_{b,s}(\mathbb R_+;M(IB))$ for any $a\in A$;
\item[(h3)]
$\Psi^+_{t,\tau}(a)-\Psi^-_{t,\tau}(a)\in C_b(\mathbb R_+;IB)$ for any $a\in A$.
\item[(h4)]
$M^\pm_{t,\tau}(a,b)\Phi^+_{t,\tau}(c_1)\cdots\Phi^+_{t,\tau}(c_n)(\Phi^+_{t,\tau}(d)-\Phi^-_{t,\tau}(d))\in C_0(\mathbb R_+;IB)=C_0(\mathbb R_+\times[0,1];B)$ for any $a,b,c_1,\ldots,c_n,d\in A$;
\end{itemize}
\end{enumerate}
\end{lem}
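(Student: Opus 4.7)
The plan is to verify the equivalence by unpacking the definition of an asymptotic $KK$-cycle with target $IB=C([0,1];B)$ in place of $B$ and matching each of its conditions (a1), (b2), (b3) against (h1)--(h4). The key identifications I will rely on are $C_0(\mathbb{R}_+;IB)=C_0(\mathbb{R}_+\times[0,1];B)$, the inclusion $IB\subset M(IB)$, and the fact that $C_b(\mathbb{R}_+;IB)$ is a closed two-sided ideal in $C_{b,s}(\mathbb{R}_+;M(IB))$; this last point is the direct $IB$-analogue of the fact about $C_b(\mathbb{R}_+;B)\subset C_{b,s}(\mathbb{R}_+;MB)$ implicit in Section~1.

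For (1)$\Rightarrow$(2): (h1) is literally (a1) with $B$ replaced by $IB$, and (h2) just records that the target of $\Psi^\pm$ is $C_{b,s}(\mathbb{R}_+;M(IB))$. Each generator $\Psi^+(a)-\Psi^-(a)$ of $D$ lies in $D$ by definition, and (b2) then places it in $C_b(\mathbb{R}_+;IB)$, giving (h3). For (h4), observe that the element $\Psi^+(c_1)\cdots\Psi^+(c_n)(\Psi^+(d)-\Psi^-(d))$ belongs to the ideal $D$, so condition (b3) applied to this element yields exactly (h4).

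For (2)$\Rightarrow$(1): (a1) is (h1) and the target condition of an asymptotic $KK$-cycle is (h2). For (b2), (h3) puts every generator $\Psi^+(a)-\Psi^-(a)$ of $D$ into the closed two-sided ideal $C_b(\mathbb{R}_+;IB)\cap E$ of $E$, so the whole ideal satisfies $D\subset C_b(\mathbb{R}_+;IB)$. For (b3), an arbitrary $x\in D$ is a norm limit of finite sums of products of the form $\Psi^+(c_1)\cdots\Psi^+(c_n)(\Psi^+(d)-\Psi^-(d))e$ with $e\in E$; (h4) makes the prefix $M^\pm(a,b)\cdot\Psi^+(c_1)\cdots\Psi^+(c_n)(\Psi^+(d)-\Psi^-(d))$ vanish asymptotically in $IB$, and right-multiplication by the bounded element $e\in E\subset C_{b,s}(\mathbb{R}_+;M(IB))$, followed by passage to norm limits, preserves the $C_0$-vanishing, yielding (b3).

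The main (and quite mild) obstacle is purely bookkeeping: correctly tracking the dual role of $\mathbb{R}_+$ (asymptotic parameter) and $[0,1]$ (homotopy parameter) under the identification $C_0(\mathbb{R}_+;IB)=C_0(\mathbb{R}_+\times[0,1];B)$, and invoking the standard fact that the ideal in a $C^*$-algebra generated by a set $S$ is the norm-closure of finite sums of products $s\cdot e$ with $s\in S$ and $e$ in the algebra. No new idea beyond the proof of the preceding proposition is needed.
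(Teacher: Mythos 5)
Your proof is correct, and it makes explicit exactly the unwinding of the definition that the paper leaves implicit (the lemma is stated with a bare \qed, treating the equivalence as a direct translation of (a1), (b2), (b3) with $IB$ in place of $B$). In particular you correctly match (h1)--(h4) to the defining conditions, invoke that $C_b(\mathbb R_+;IB)$ is a closed two-sided ideal in $C_{b,s}(\mathbb R_+;M(IB))$ to pass from generators to all of $D$, and use the same density argument for $D$ that the paper employs in the proof of the preceding Proposition, so the approach is the paper's own.
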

\qed



Denote the homotopy class of a (compact) asymptotic $KK$-cycle $(\Phi^+,\Phi^-)$ by $[(\Phi^+,\Phi^-)]$, and 
denote the set of all homotopy classes of (compact) asymptotic $KK$-cycles from $A$ to $B$ by $EL(A,B)$ (resp. by $EM(A,B)$). 

Stability of $B$ allows to define addition on $EL(A,B)$ and on $EM(A,B)$: let $s_1,s_2\in MB$ be isometries with $s_1s_1^*+s_2s_2^*=1$, and let $\tilde s_1,\tilde s_2\in C_{b,s}(\mathbb R_+;MB)$ be the constant functions with values $s_1$ and $s_2$ respectively. Let $(\Phi^+,\Phi^-)$, $(\Psi^+,\Psi^-)$ be two (compact) asymptotic $KK$-cycles. Set 
$$ 
[(\Phi^+,\Phi^-)]+[(\Psi^+,\Psi^-)]=[(\tilde s_1\Phi^+\tilde s_1^*+\tilde s_2\Psi^+\tilde s_2^*,\tilde s_1\Phi^-\tilde s_1^*+\tilde s_2\Psi^-\tilde s_2^*)].
$$

It is easy to see that the pair in the right-hand side is an asymptotic $KK$-cycle, and that the addition is well defined.

\begin{lem}\label{zero}
An asymptotic $KK$-cycle $(\Phi^+,\Phi^-)$ with $\Phi^+=\Phi^-$ represents the zero element in $EL(A,B)$.

\end{lem}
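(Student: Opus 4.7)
The plan is to exhibit an explicit homotopy from $(\Phi,\Phi)$ to the zero pair $(0,0)$. The crucial observation is that when $\Phi^+=\Phi^-$, the ideal $D(\Phi^+,\Phi^-)$ generated by differences $\Phi^+(a)-\Phi^-(a)$ is zero, so conditions (b2) and (b3) in the definition of asymptotic $KK$-cycle hold vacuously, and only the homogeneity/asymptotic involutivity in (a1) carries content. This means any family $(\Psi^+,\Psi^-)$ with $\Psi^+\equiv\Psi^-$ built from a single scalar rescaling of $\Phi$ will automatically satisfy the cocycle axioms.

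Concretely, set $\Psi^\pm_{t,\tau}(a):=(1-\tau)\Phi_t(a)$ for $(t,\tau)\in\mathbb R_+\times[0,1]$. At $\tau=0$ we recover $(\Phi,\Phi)$ and at $\tau=1$ we obtain $(0,0)$, and $\Psi^+=\Psi^-$ throughout. I would then invoke Lemma~\ref{homotopy-property} and check (h1)--(h4) one by one. Homogeneity in (h1) is immediate. Asymptotic involutivity in (h1) holds because
$\Psi^\pm_{t,\tau}(a^*)-\Psi^\pm_{t,\tau}(a)^* = (1-\tau)\bigl(\Phi_t(a^*)-\Phi_t(a)^*\bigr)$;
the factor $\Phi_t(a^*)-\Phi_t(a)^*$ is in $C_0(\mathbb R_+;B)$ by hypothesis, and multiplication by the continuous bounded function $(1-\tau)$ keeps it in $C_0(\mathbb R_+\times[0,1];B)=C_0(\mathbb R_+;IB)$. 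Condition (h2) reduces to joint strict continuity in $(t,\tau)$ of the product of the scalar $(1-\tau)$ and the strictly continuous $MB$-valued function $\Phi_t(a)$, which is routine. Conditions (h3) and (h4) are trivial because $\Psi^+-\Psi^-\equiv 0$.

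Granted these verifications, $(\Psi^+,\Psi^-)$ is an asymptotic $KK$-cycle from $A$ to $IB$ whose evaluations at $0$ and $1$ are $(\Phi,\Phi)$ and $(0,0)$ respectively, which is exactly the required homotopy, yielding $[(\Phi,\Phi)]=0$ in $EL(A,B)$.

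There is no real obstacle here: the entire argument rests on the observation that $\Phi^+=\Phi^-$ trivializes the ideal $D$, reducing the axioms for an asymptotic $KK$-cycle to just homogeneity and asymptotic involutivity, both of which are obviously preserved under the linear contraction $\Phi\mapsto(1-\tau)\Phi$. The only mild piece of bookkeeping is to confirm that pointwise $C_0$-decay in $t$ promotes to joint $C_0$-decay on $\mathbb R_+\times[0,1]$ after multiplication by the bounded continuous factor $(1-\tau)$, which follows from uniform boundedness of this factor in $\tau$.
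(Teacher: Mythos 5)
Your linear contraction $\Psi^\pm_{t,\tau}(a)=(1-\tau)\Phi_t(a)$ is exactly the homotopy the paper uses (the paper writes it as $\tau\Phi$), and your verification of (h1)--(h4) is fine; so you correctly establish that $[(\Phi,\Phi)]=[(0,0)]$. But this is only half of what the lemma requires, and the proof as written has a genuine gap.

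At the point where this lemma appears, $EL(A,B)$ is merely a set equipped with a binary operation coming from stability of $B$; it has not yet been shown to be a monoid, let alone a group. The claim that $(\Phi,\Phi)$ ``represents the zero element'' therefore cannot be read off from $[(\Phi,\Phi)]=[(0,0)]$ alone -- one still has to prove that $[(0,0)]$ is in fact a neutral element for the addition. Unwinding the definition, $[(\Psi^+,\Psi^-)]+[(0,0)]=[(\tilde s_1\Psi^+\tilde s_1^*,\,\tilde s_1\Psi^-\tilde s_1^*)]$, and it is not automatic that this class equals $[(\Psi^+,\Psi^-)]$: the isometry $s_1$ conjugation moves the representative. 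The paper supplies the missing step by choosing strictly continuous paths $s_1(\tau),s_2(\tau)$ with $s_1(0)=1$, $s_2(0)=0$, and $s_1(\tau)s_1(\tau)^*+s_2(\tau)s_2(\tau)^*=1$ for $\tau\in(0,1]$, which produces a homotopy from $(\Psi^+,\Psi^-)$ to $(\tilde s_1\Psi^+\tilde s_1^*,\tilde s_1\Psi^-\tilde s_1^*)$ and hence shows $[(\Psi^+,\Psi^-)]+[(0,0)]=[(\Psi^+,\Psi^-)]$. You need to include this (or an equivalent argument) before you can conclude that $[(\Phi,\Phi)]$ is the zero element rather than merely a distinguished class.
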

\begin{proof}
Let $\Phi=\Phi^+=\Phi^-$.
Note that the ideal $D$ generated by  $\Phi^+(a)-\Phi^-(a)$, $a\in A$, is zero, hence $(\tau\Phi,\tau\Phi)$ is an asymptotic $KK$-cycle for any $\tau\in[0,1]$, so $[(\Phi,\Phi)]=[(0,0)]$. 

Let $(\Psi^+,\Psi^-)$ be an asymptotic $KK$-cycle. There are strictly continuous paths $s_1(\tau)$, $s_2(\tau)$, $\tau\in[0,1]$, such that, for $\tau\in(0,1]$, $s_1(\tau)$ and $s_2(\tau)$ are isometries with $s_1(\tau)s_1(\tau)^*+s_2(\tau)s_2(\tau)^*=1$, and $s_1(0)=1$, $s_2(0)=0$. 
Set 
$$
\Psi^\pm(a)(\tau)=\tilde s_1(\tau)\Psi^\pm(a)\tilde s_1(\tau)^*
$$
This is an asymptotic $KK$-cycle from $A$ to $IB$ ((h1)-(h4) trivially hold), hence
$[(\Psi^+,\Psi^-)]=[(\Psi^+,\Psi^-)]+[(0,0)]$. 

\end{proof}

The standard rotation argument shows that 
$$
[(\Phi^+,\Phi^-)]+[(\Psi^+,\Psi^-)]=[(\Psi^+,\Psi^-)]+[(\Phi^+,\Phi^-)]
$$ 
and that 
$$
[(\Phi^+,\Phi^-)]+[(\Phi^-,\Phi^+)]=0
$$ 
in $EL(A,B)$, hence $EL(A,B)$ is an abelian group.

Similar results hold for compact asymptotic $KK$-cycles.

\section{Making asymptotic $KK$-cycles compact}

There is a canonical map $j:EM(A,B)\to EL(A,B)$ induced by the inclusion $B\subset MB$. Here we construct a map in the opposite direction and show that these two maps are inverse to each other.

Let $(\Phi^+,\Phi^-)$ be an asymptotic $KK$-cycle, $[(\Phi^+,\Phi^-)]\in EL(A,B)$. Recall that $D\subset C_b(\mathbb R_+;B)$ denotes the ideal generated by $\Phi^+(a)-\Phi^-(a)$, $a\in A$, in the $C^*$-algebra $E$ generated by $\Phi^\pm(A)$. Let $(v_r)_{r\in\mathbb R_+}$ be an approximate unit in $D$, quasicentral in $E$. 

\begin{lem}\label{reparam0}
Let $\varphi:A\to E$ be a continuous map. There exists an order-preserving homeomorphism $r$ of $[0,\infty)$ (a reparametrization) such that
\begin{itemize}
\item[(r1)]
$\lim_{t\to\infty}[v_{r(t)},\varphi_t(a)]=0$ for any $a\in A$;
\item[(r2)]
$\lim_{t\to\infty}v_{r(t)}x_t-x_t=0$ for any $x\in D$.
\end{itemize} 

\end{lem}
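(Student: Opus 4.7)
The plan is to reduce the two pointwise-in-$t$ conditions (r1) and (r2) to the uniform-in-$t$ statements already built into the definition of $(v_r)$. The key observation is that $E$ and $D$ both sit inside $C_b(\mathbb R_+;\cdot)$ with the supremum norm, so the norms on $E$ and on $D$ are of the form $\sup_t\|\cdot(t)\|$. Consequently, the quasicentrality of $(v_r)$ in $E$ and the fact that $(v_r)$ is an approximate unit in $D$ are automatically uniform in the time variable $t$.

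First I would observe that, since $\varphi(a)\in E$ and $v_r\in D\subset E$, quasicentrality yields
\[
\sup_t\bigl\|[v_r(t),\varphi_t(a)]\bigr\|=\bigl\|[v_r,\varphi(a)]\bigr\|_E\longrightarrow 0\quad\text{as } r\to\infty,
\]
for every $a\in A$, and the approximate-unit property gives
\[
\sup_t\bigl\|v_r(t)x(t)-x(t)\bigr\|=\bigl\|v_rx-x\bigr\|_D\longrightarrow 0\quad\text{as } r\to\infty,
\]
for every $x\in D$.

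With these facts in hand, one is free to take $r\colon[0,\infty)\to[0,\infty)$ to be any order-preserving homeomorphism with $r(t)\to\infty$; the simplest choice $r(t)=t$ already works. Indeed, for $a\in A$,
\[
\bigl\|[v_{r(t)}(t),\varphi_t(a)]\bigr\|\leq\bigl\|[v_{r(t)},\varphi(a)]\bigr\|_E,
\]
and the right-hand side tends to $0$ as $t\to\infty$ because $r(t)\to\infty$, giving (r1). An identical argument, with the commutator replaced by $v_{r(t)}(t)x_t-x_t$, gives (r2).

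The lemma is therefore essentially a bookkeeping statement, and I do not anticipate any substantive obstacle. The reason to phrase it with a general reparametrization $r$ rather than simply $r(t)=t$ is presumably to leave room for synchronizing $(v_{r(t)})$ with other asymptotic data in later constructions. Should one ever need to fit $r$ to additional constraints, the standard recipe is a diagonalization over countable dense sets in $A$ and $D$ (both exist, $E$ being separable because $A$ is): one inductively selects $r(n)$ large enough that the first $n$ estimates from the two displays above hold within $1/n$ on the first $n$ dense elements, then defines $r$ by linear interpolation between the $r(n)$, and extends to arbitrary $a\in A$ and $x\in D$ by continuity of $\varphi$ and density.
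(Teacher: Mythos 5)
Your proof is correct, and the central observation is sound: since $E$ and $D$ sit inside $C_{b,s}(\mathbb R_+;MB)$ (resp. $C_b(\mathbb R_+;B)$) where the $C^*$-norm is the $t$-supremum, quasicentrality of $(v_r)$ in $E$ and the approximate-unit property for $D$ already deliver the uniform-in-$t$ estimates $\sup_t\|[v_r(t),\varphi_t(a)]\|\to 0$ and $\sup_t\|v_r(t)x_t-x_t\|\to 0$ as $r\to\infty$. Consequently any reparametrization with $r(t)\to\infty$ satisfies (r1) and (r2), and $r(t)=t$ in particular. The paper makes exactly this point for (r2), but for (r1) it instead runs an inductive diagonalization over a dense sequence $a_1,a_2,\dots\in A$, choosing $t_n$ so that a commutator estimate holds for $a_1,\dots,a_n$ past time $t_n$ and then inverting $n\mapsto t_n$. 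Your direct argument is cleaner for the lemma as stated; the paper's diagonal construction buys extra flexibility, which it then genuinely needs in Lemma~\ref{v}, where the reparametrization must additionally be slow enough to make $f(v)M(a,b)$ vanish --- a condition that, unlike (r1), does \emph{not} hold for an arbitrary $r(t)\to\infty$. You anticipate this correctly in your closing remarks, so nothing essential is missing; the only thing I'd emphasize is that the diagonal scheme is not mere bookkeeping room but is actually invoked downstream.
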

\begin{proof}
As $v_r$ is an approximate unit for $D$, we have 
$$
\lim_{r\to\infty}\sup_{t\in\mathbb R_+}\|v_r(t)x_t-x_t\|=0
$$
for any $x\in D$. Therefore, any function $r=r(t)$ satisfies (r2) if $\lim_{t\to\infty}r(t)=\infty$.

Consider a dense sequence $a_1,a_2,\ldots\in A$ and positive numbers $\varepsilon_1,\varepsilon_2,\ldots$ such that $\lim_{n\to\infty}\varepsilon_n=0$. Inductively, for $r=n\in\mathbb N$ find $t_n$ such that $t_n\geq t_{n-1}+1$ and $\|[v_n(t),\varphi_t(a_i)]\|<\varepsilon_n$ for $i=1,2,\ldots,n$ when $t\geq t_n$. Extend the map $n\mapsto t_n$ to a homeomorphism of $[0,\infty)$ and take its inverse as $r$.

\end{proof}

As a bonus, we have $[v,\varphi(a)]\in C_0(\mathbb R_+;B)$ (i.e. $[v,\varphi(a)]$ is norm-continuous) for any $a\in A$. 

Set $v(t)=v_{r(t)}(t)$, $v\in C_b(\mathbb R_+;B)$, and $\Psi^\pm(a)=v\Phi^\pm(a) \in C_b(\mathbb R_+;B)$. Note that (a1) holds, i.e. that $(v\Phi^\pm(a))^*-v\Phi^\pm(a^*)\sim 0$. 

Then 
$$
(\Psi^+(ab)-\Psi^+(a)\Psi^+(b))(\Psi^+(c)-\Psi^-(c))
$$
$$
=v(\Phi^+(ab)-\Phi^+(a)\Phi^+(b))v(\Phi^+(c)-\Phi^-(c))
$$
$$
+v\Phi^+(a)(1-v)\Phi^+(b)v(\Phi^+(c)-\Phi^-(c)).
$$ 
Properties of $v$ imply that the both terms here lie in $C_0(\mathbb R_+;B)$.  

Similarly, one can show that 
$$
M^\pm(a,b)\Psi^\pm(c_1)\cdots\Psi^\pm(c_n)(\Psi^+(d)-\Psi^-(d))\in C_0(\mathbb R_+;B)
$$  
for any $a,b,c,d\in A$, where $M^\pm(a,b)$ is either $\Psi^+(a+b)-\Psi^\pm(a)-\Psi^\pm(b)$ or $\Psi^+(ab)-\Psi^\pm(a)\Psi^\pm(b)$, hence $(\Psi^+,\Psi^-)$ is a compact asymptotic $KK$-cycle.

Two different choices of an approximate unit $v$ and of a reparametrization give rise to two homotopic compact asymptotic $KK$-cycles. Also, if $(\Phi^+,\Phi^-)$ and $(\Xi^+,\Xi^-)$ are two homotopic asymptotic $KK$-cycles then one can find an approximate unit $v$ and a reparametrization related to this homotopy, so that $v^0=\ev_0 v$, $v^1=\ev_1 v$ are approximate units for the given two asymptotic $KK$-cycles, and $(v^0\Phi^+,v^0\Phi^-)$ and $(v^1\Xi^+,v^1\Xi^-)$ are homotopic. Thus this construction defines a map 
$$
p:EL(A,B)\to EM(A,B).
$$ 

\begin{thm}\label{Lemma7}
The map $p$ is an isomorphism.

\end{thm}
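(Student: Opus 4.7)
The map $j$ was defined as induced by the inclusion $B \subset MB$, and $p$ by the cut-down construction just above the statement. The goal is to show both $j \circ p = \id_{EL(A,B)}$ and $p \circ j = \id_{EM(A,B)}$, and each identity reduces to exhibiting an explicit homotopy from a (compact) asymptotic $KK$-cycle $(\Phi^+,\Phi^-)$ to its cut-down $(v\Phi^+,v\Phi^-)$ in the appropriate category. The natural candidate is the linear interpolation
$$
H_\tau^\pm(a) = u_\tau \Phi^\pm(a), \qquad u_\tau = (1-\tau)v + \tau, \qquad \tau \in [0,1],
$$
which connects $(v\Phi^+,v\Phi^-)$ at $\tau = 0$ to $(\Phi^+,\Phi^-)$ at $\tau = 1$. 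Since $u_\tau$ is a polynomial in $v$, it commutes with $v$, so the single reparametrization chosen for $\Phi^\pm$ via Lemma~\ref{reparam0} ensures that (r1) and (r2) hold for the whole family $H_\tau^\pm$ uniformly in $\tau$.

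To establish $j \circ p = \id$ I would verify, via Lemma~\ref{homotopy-property}, that $(H^+,H^-)$ is an asymptotic $KK$-cycle from $A$ to $IB = C([0,1];B)$. Conditions (h1)--(h3) follow quickly from the corresponding properties of $(\Phi^+,\Phi^-)$ together with the observation $H_\tau^+(a) - H_\tau^-(a) = u_\tau(\Phi^+(a) - \Phi^-(a)) \in C_b(\mathbb R_+ \times [0,1];B)$, which holds by (b2). The substantive step is (h4). The additive defect $H_\tau^+(a+b) - H_\tau^+(a) - H_\tau^+(b)$ is just $u_\tau M^+(a,b)$, whereas the multiplicative defect satisfies
$$
H_\tau^+(ab) - H_\tau^+(a) H_\tau^+(b) = u_\tau M^+(a,b) + (1-\tau) u_\tau \Phi^+(a)(1-v)\Phi^+(b).
$$
Multiplying on the right by $H_\tau^+(c_1)\cdots H_\tau^+(c_n)(H_\tau^+(d) - H_\tau^-(d))$, the first summand lies in $C_0$ by (b3) for $(\Phi^+,\Phi^-)$, and the second summand also lies in $C_0$ because $H_\tau^+(d) - H_\tau^-(d) = u_\tau(\Phi^+(d) - \Phi^-(d)) \in D$: one commutes $(1-v)$ through the intermediate $\Phi^+(c_i)$ factors modulo $C_0$ via quasicentrality (r1), arriving at a term of the form $(\text{bounded}) \cdot (1-v) \cdot x$ with $x \in D$, which vanishes asymptotically by (r2).

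The proof of $p \circ j = \id_{EM(A,B)}$ is formally identical, except now $(\Phi^+,\Phi^-)$ already takes values in $C_b(\mathbb R_+;B)$, so $H_\tau^\pm$ lands in $C_b(\mathbb R_+ \times [0,1];B)$ throughout the interpolation and one verifies (a1)--(a2) directly in place of (h1)--(h4); the algebraic identity above and the approximate-unit bookkeeping are the same. The main obstacle in either direction is precisely this bookkeeping: pushing a single factor $(1-v)$ through a chain of $n$ factors $\Phi^+(c_i)$ generates an $n$-fold cascade of commutator errors, and one must check that these accumulate to give $C_0$-smallness uniformly in $\tau \in [0,1]$. The uniformity is what the joint use of (r1) and (r2) buys, and once it is in place both identities follow, establishing that $p$ is an isomorphism with inverse $j$.
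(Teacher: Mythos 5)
Your proof takes a genuinely different, more direct route than the paper, especially for the harder identity $j\circ p=\id_{EL(A,B)}$ (Claim~2 in the paper). The paper does \emph{not} use a single linear interpolation there. Instead it: (1)~passes from $v$ to a different approximate unit $w$, taken quasicentral for a \emph{larger} ideal $\mathcal D\subset\mathcal E$ that additionally contains the constant $B$-valued functions; (2)~observes $w\Phi^\pm(a)\sim w^{1/2}\Phi^\pm(a)w^{1/2}$; (3)~introduces the isometry $V=\bigl(\begin{smallmatrix}w^{1/2}\\(1-w)^{1/2}\end{smallmatrix}\bigr)$ and invokes Lemma~\ref{unitary family} (unitary conjugation) to get $[(\Phi^+,\Phi^-)]=[(V\Phi^+V^*,V\Phi^-V^*)]$; and (4)~uses the $2\times2$ corner-cutting homotopy $A^\pm(\tau)=\bigl(\begin{smallmatrix}1&0\\0&\tau\end{smallmatrix}\bigr)V\Phi^\pm V^*\bigl(\begin{smallmatrix}1&0\\0&\tau\end{smallmatrix}\bigr)$ of Lemma~\ref{lem11} to reach $(w^{1/2}\Phi^+w^{1/2},w^{1/2}\Phi^-w^{1/2})$, followed by the linear homotopy between $v$ and $w$. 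You bypass all of this with the single affine path $u_\tau=(1-\tau)v+\tau\cdot 1$.

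Your algebraic identity for the multiplicative defect is correct, and the mechanism you invoke (commute $1-v$ and the $u_\tau$'s past the $\Phi^\pm(c_i)$'s via (r1), then kill the tail with (r2) since $u_\tau(\Phi^+(d)-\Phi^-(d))\in D$, with uniformity in $\tau$ coming from $\|u_\tau\|\le1$ and $[v,u_\tau]=0$) is the right one. Two small things you elide that are worth making explicit: condition (h1) (asymptotic involutivity of $H^\pm_\tau$) is not a direct consequence of (a1) for $\Phi^\pm$ alone — it also needs $[u_\tau,\Phi^\pm(a)^*]\to0$, i.e.\ another use of (r1); and condition (h2), that $u_\tau\Phi^\pm(a)\in C_{b,s}(\mathbb R_+;M(IB))$, deserves a sentence — it holds because $v\Phi^\pm(a)$ is norm-continuous with values in $B$, while the constant-in-$\tau$ multiplier $\Phi^\pm(a)$ is strictly continuous, and strict convergence of a bounded path is uniform on the compact range of any $f\in IB$. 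Likewise, for the first summand $u_\tau M^+(a,b)\,H^+(c_1)\cdots$ you still need to slide the $u_\tau$'s past the $\Phi^+(c_i)$'s (collecting commutator errors) before (b3) applies; you only mention this bookkeeping for the second summand. With those points filled in, your argument goes through and is shorter; one thing the paper's isometry/unitary-conjugation route buys is that Lemmas~\ref{unitary family} and~\ref{lem11}, and the remark after Lemma~\ref{lem11}, are reused later in the proof of the main isomorphism theorem to handle the analogous cut-downs there, so the machinery is not wasted.
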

{\bf Claim 1.} $p\circ j$ is the identity map on $EM(A,B)$. 

Let $(\Psi^+,\Psi^-)$ be a compact asymptotic $KK$-cycle. Consider it as an asymptotic $KK$-cycle, and find an appropriate approximate unit $v$ such that $[(v\Psi^+,v\Psi^-)]=p([(\Psi^+,\Psi^-)])$. Since $\Psi^\pm(A)\subset C_b(\mathbb R_+;B)$, there exists $w\in C_b(\mathbb R_+;B)$ such that $0\leq w\leq 1$ and $(1-w)\Psi^\pm(a)\sim 0$ for any $a\in A$. Then $[(w\Psi^+,w\Psi^-)]=[(\Psi^+,\Psi^-)]$. Connecting $v$ and $w$ by the linear homotopy, we conclude that $p([(\Psi^+,\Psi^-)])=[(\Psi^+,\Psi^-)]$. \qed

{\bf Claim 2.} $j\circ p$ is the identity map on $EL(A,B)$.

Let $(\Phi^+,\Phi^-)$ be an asymptotic $KK$-cycle.
Consider the $C^*$-algebra $\mathcal E$ generated by $\Phi^\pm(A)$ and by constant $B$-valued functions on $\mathbb R_+$, and let $\mathcal D\subset\mathcal E$ be the ideal generated by $\Phi^+(a)-\Phi^-(a)$, $a\in A$, and by constant $B$-valued functions on $\mathbb R_+$. Let $(w_r)_{r\in\mathbb R_+}$ be an approximate unit in $\mathcal D$, quasicentral in $\mathcal E$. Set $w(t)=w_{r(t)}(t)$, $t\in\mathbb R_+$, $w\in C_b(\mathbb R_+;B)$, where $r$ satisfies $[w_{r(t)},\Phi^\pm_t(a)]\sim 0$ for any $a\in A$. Then the linear homotopy connecting $v$ and $w$ shows that the asymptotic $KK$-cycles $(v\Phi^+,v\Phi^-)$ and $(w\Phi^+,w\Phi^-)$ are homotopic. Note that $w\Phi^\pm(a)\sim w^{1/2}\Phi^\pm(a)w^{1/2}$.

\begin{lem}\label{unitary family}
Let $(\Psi^+_t,\Psi^-_t)$ be an asymptotic $KK$-cycle from $A$ to $B$, $U_t\in M(B)$ a norm-continuous family of unitaries, $t\in\mathbb R_+$. Then $(U_t\Psi^+_tU^*_t,U_t\Psi^-_tU^*_t)$ is an asymptotic $KK$-cocycle homotopic to $(\Psi^+_t,\Psi^-_t)$.

\end{lem}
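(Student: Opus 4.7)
The plan is to split the statement into (i) checking that $(U_t\Psi^\pm_tU_t^*)$ satisfies axioms (a1), (b2), (b3) of the asymptotic $KK$-cycle definition, and (ii) constructing an explicit homotopy to $(\Psi^\pm_t)$ via the standard $2\times 2$ rotation trick, which is available because $B$ is stable.

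For (i), I would observe that conjugation by a norm-continuous unitary family in $M(B)$ is $\mathbb C$-linear in $\Psi^\pm$, intertwines adjoints modulo $C_0$, and preserves both the ideal $B\subset M(B)$ and the subspace $C_0(\mathbb R_+;B)\subset C_b(\mathbb R_+;M(B))$. Since
$$
U_t\Psi^+_tU_t^* - U_t\Psi^-_tU_t^* = U_t(\Psi^+_t-\Psi^-_t)U_t^*,
$$
axiom (b2) is immediate; and because the deficiency $M^\pm$ of the conjugated cycle equals $U_tM^\pm_{\mathrm{old}}U_t^*$ while products $U_t\Psi^\pm_t(c_1)U_t^*\cdots U_t\Psi^\pm_t(c_n)U_t^*$ collapse to $U_t\Psi^\pm_t(c_1)\cdots\Psi^\pm_t(c_n)U_t^*$, axiom (b3) for the conjugated cycle reduces to (b3) for the original.

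For (ii), I would fix isometries $s_1,s_2\in M(B)$ with $s_1s_1^*+s_2s_2^*=1$ (by stability of $B$) and reduce the problem to homotoping the ``matrix-inflated'' cycles. By Lemma \ref{zero} combined with the addition formula on $EL(A,B)$, $[(s_1\Psi^\pm_ts_1^*)] = [(\Psi^\pm_t)] + [(0,0)] = [(\Psi^\pm_t)]$, and analogously $[(s_1U_t\Psi^\pm_tU_t^*s_1^*)] = [(U_t\Psi^\pm_tU_t^*)]$, so it suffices to homotope $s_1\Psi^\pm_ts_1^*$ to $s_1U_t\Psi^\pm_tU_t^*s_1^*$. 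Setting
$$
J = s_2s_1^* - s_1s_2^*, \qquad R_\tau = \cos(\pi\tau/2) + \sin(\pi\tau/2)\,J, \qquad D(t) = s_1U_ts_1^* + s_2s_2^*,
$$
one computes $J^*=-J$ and $J^2=-1$ using $s_i^*s_j=\delta_{ij}$, so $R_\tau$ is unitary, and $D(t)$ is a norm-continuous unitary family. The product $W_\tau(t) := R_\tau D(t) R_\tau^*$ is then a jointly norm-continuous family of unitaries in $M(B)$ satisfying $W_0(t)s_1 = s_1U_t$ and $W_1(t)s_1 = s_1$ (direct computation), so
$$
\Xi^\pm_{t,\tau} := W_\tau(t)\bigl(s_1\Psi^\pm_ts_1^*\bigr)W_\tau(t)^*
$$
interpolates with $\Xi^\pm_{t,0} = s_1U_t\Psi^\pm_tU_t^*s_1^*$ and $\Xi^\pm_{t,1} = s_1\Psi^\pm_ts_1^*$.

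The main obstacle will be verifying that $(\Xi^+,\Xi^-)$ is itself an asymptotic $KK$-cycle from $A$ to $IB$, but this reduces to repeating the argument of (i) with $W_\tau(t)\in C([0,1];M(B))\subset M(IB)$ in place of $U_t$, using joint norm-continuity of $W_\tau(t)$ and the fact that conjugation by unitaries preserves both the $B$- and $C_0$-ideal structures and commutes with forming products. Concatenating this homotopy with the two ``inflation'' homotopies supplied by Lemma \ref{zero} produces the homotopy asserted in the statement.
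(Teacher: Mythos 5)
Your argument is correct, but it takes a genuinely different route than the paper. The paper's proof is two lines: first it deforms the unitary family by reparametrizing, observing that $(\tau,t)\mapsto U_{\tau t}$ is a jointly norm-continuous unitary family in $C_b(\mathbb R_+;M(IB))$, so $(U_{\tau t}\Psi^\pm_t U^*_{\tau t})$ is a homotopy of asymptotic $KK$-cycles from $(U_0\Psi^\pm_t U_0^*)$ (at $\tau=0$) to $(U_t\Psi^\pm_t U_t^*)$ (at $\tau=1$); then it invokes norm-contractibility of the unitary group of $M(B)$ (a consequence of stability, due to Cuntz--Higson/Mingo) to connect the constant unitary $U_0$ to $1$, hence $(U_0\Psi^\pm_t U_0^*)$ to $(\Psi^\pm_t)$. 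You instead bypass the contractibility theorem by using the standard $2\times 2$ rotation trick: after inflating to $s_1\Psi^\pm_t s_1^*$ via Lemma~\ref{zero}, you write down an explicit jointly norm-continuous unitary path $W_\tau(t)=R_\tau D(t)R_\tau^*$ whose endpoints conjugate $s_1\Psi^\pm_t s_1^*$ respectively to $s_1 U_t\Psi^\pm_t U_t^* s_1^*$ and back to $s_1\Psi^\pm_t s_1^*$, and your algebra checking $W_0(t)s_1=s_1U_t$, $W_1(t)s_1=s_1$ is correct. Both proofs lean on stability of $B$ (one via the isometries $s_1,s_2$, the other via the contractibility theorem), and both verify axioms of the homotopy cycle in the same way (conjugation by a jointly norm-continuous unitary family in $C_b(\mathbb R_+;M(IB))$ preserves all four conditions). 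The paper's version is shorter because it quotes a black-box result; yours is more elementary and self-contained, at the cost of the extra inflation step and the matrix computation. One small stylistic point: in the paper's proof the first homotopy already replaces $U_t$ by a constant unitary without ever passing to $2\times 2$ inflation, which keeps everything within the original $B$ rather than introducing the auxiliary copies through $s_1,s_2$; your argument is no less rigorous, just a bit bulkier.
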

\begin{proof}
Note that $(U_{\tau t}\Psi^+_tU^*_{\tau t},U_{\tau t}\Psi^-_tU^*_{\tau t})$ is an asymptotic $KK$-cycle from $A$ to $IB$ as $U_{\tau t}\in C_b(\mathbb R_+;M(IB))$. Then use contractibility of the unitary group of $M(B)$.

\end{proof}

Set 
$$
V=\left(\begin{matrix}w^{1/2}\\(1-w)^{1/2}\end{matrix}\right).
$$
$V$ is an isometry, and there is a norm-continuous family $U_t$ of unitaries such that $V\Phi^\pm V^*\oplus 0=U(\Phi^\pm\oplus 0) U^*$, hence $[(\Phi^+,\Phi^-)]=[(V\Phi^+V^*,V\Phi^-V^*)]$. Write $V\Phi^\pm V^*$ as a matrix:
$$
V\Phi^\pm(a) V^*=A^\pm=\left(\begin{matrix}a_{11}^\pm&a_{12}^\pm\\a_{21}^\pm&a_{22}^\pm\end{matrix}\right).
$$ 
Note that 
\begin{equation}\label{zero}
(a^+_{ij}-a^-_{ij})(1-w)^{1/2}\sim 0 \quad\mbox{if}\quad (ij)\neq (11), 
\end{equation}
as $wx\sim x$ for any $x\in D$. 

For $\tau\in[0,1]$ set $A^\pm(\tau)=\left(\begin{smallmatrix}1&0\\0&\tau\end{smallmatrix}\right)V\Phi^\pm(a) V^*\left(\begin{smallmatrix}1&0\\0&\tau\end{smallmatrix}\right)$.

\begin{lem}\label{lem11}
This is a homotopy connecting $(V\Phi^+ V^*,V\Phi^- V^*)$ with $(w^{1/2}\Phi^+ w^{1/2},w^{1/2}\Phi^- w^{1/2})$.

\end{lem}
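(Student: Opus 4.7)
The plan is to verify that $A^\pm(\tau)$ forms an asymptotic $KK$-cycle from $A$ to $IB$ whose evaluations at the endpoints $\tau=0,1$ are the two stated cycles. At $\tau=1$ we trivially recover $V\Phi^\pm V^*$; at $\tau=0$ only the $(1,1)$-entry $w^{1/2}\Phi^\pm w^{1/2}$ survives, which under the stabilization of $B$ represents $(w^{1/2}\Phi^+ w^{1/2}, w^{1/2}\Phi^- w^{1/2})$. Joint continuity in $(t,\tau)$ follows from strict continuity in $t$, inherited from $\Phi^\pm$, together with the polynomial $\tau$-dependence.

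Using the homotopy criterion of Lemma \ref{homotopy-property}, conditions (h1)–(h3) are immediate: $\mathbb C$-homogeneity, asymptotic involutivity and strict continuity are preserved by conjugation by the constant matrix coefficients, and
$$
A^+(\tau)(a) - A^-(\tau)(a) = \mathrm{diag}(1,\tau)\,V(\Phi^+(a) - \Phi^-(a))\,V^*\,\mathrm{diag}(1,\tau)
$$
lies in $C_b(\mathbb R_+; IB)$ because $\Phi^+(a) - \Phi^-(a)\in D\subset C_b(\mathbb R_+; B)$ and the $\tau$-dependence is polynomial.

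The substance lies in (h4). The crucial observation is that every sandwich $V^*\mathrm{diag}(1,\tau^2)V$ that arises between successive $A^\pm(\tau)$-factors equals $p(\tau):=w+\tau^2(1-w)=1-(1-\tau^2)(1-w)$. Thus a product $N^\pm(a,b)\,A^+(\tau)(c_1)\cdots A^+(\tau)(c_n)\bigl(A^+(\tau)(d)-A^-(\tau)(d)\bigr)$, where $N^\pm$ denotes either deficiency of $A^\pm(\tau)$, reduces to $\mathrm{diag}(1,\tau)\,V\,\bigl[\,M^\pm(a,b)\,p(\tau)\,\Phi^+(c_1)\,p(\tau)\cdots p(\tau)(\Phi^+(d)-\Phi^-(d))\,\bigr]\,V^*\,\mathrm{diag}(1,\tau)$. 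Expanding each $p(\tau)=1-(1-\tau^2)(1-w)$, the leading term (every $p(\tau)$ replaced by $1$) is $C_0(\mathbb R_+; B)$ by property (b3) for the original cycle $(\Phi^+,\Phi^-)$; every correction term carries at least one factor $(1-w)$, which I move past the intervening $\Phi^+(c_i)$'s modulo $C_0(\mathbb R_+;B)$ using quasicentrality of $w$ in $\mathcal E$, until it meets $\Phi^+(d)-\Phi^-(d)\in D$ and is absorbed by $(1-w)D\subset C_0(\mathbb R_+;B)$. Since $|1-\tau^2|\le 1$ on $[0,1]$, all these estimates are uniform in $\tau$, so the product belongs to $C_0(\mathbb R_+\times[0,1];B)$.

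The main obstacle is therefore the bookkeeping in (h4): one has to organize the expansion of all $p(\tau)$-factors and verify that every correction term is killed by a combination of quasicentrality and the approximate-unit property of $w$ on $D$, uniformly in $\tau$. Everything else reduces to direct algebraic manipulation with the fixed matrix coefficients $V$ and $\mathrm{diag}(1,\tau)$.
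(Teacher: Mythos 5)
Your argument is correct, and it takes a genuinely different route from the paper. The paper's proof works with the explicit $2\times 2$ matrix entries $a_{ij}^\pm$ of $V\Phi^\pm V^*$: it first records (via the paper's equation (2.2)) that $(a^+_{ij}-a^-_{ij})(1-w)^{1/2}\sim 0$ for $(ij)\neq(11)$, then verifies by direct inspection of the $(1,1)$-corner and of the off-corner entries of $C^\pm(\tau)-A^\pm(\tau)B^\pm(\tau)$ that the product with $D^+(\tau)-D^-(\tau)$ vanishes uniformly in $\tau$, each off-corner entry carrying a factor $(1-w)^{1/2}$ that is killed against $D$. Your proof sidesteps the entry-by-entry bookkeeping by identifying the single algebraic fact that $V^*\operatorname{diag}(1,\tau^2)V = w+\tau^2(1-w)=:p(\tau)$, so that the entire $n$-fold product conjugates to a scalar expression in which every interior slot contains $p(\tau)$; expanding $p(\tau)=1-(1-\tau^2)(1-w)$ then separates a leading term, killed by (b3) of the original cycle, from correction terms each carrying $(1-w)$, killed by quasicentrality plus $(1-w)D\subset C_0(\mathbb R_+;B)$, with the uniform bound $|1-\tau^2|\leq 1$ giving (h4). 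This is arguably cleaner and handles arbitrary $n$ at once, whereas the paper treats the short products and invokes the remark reducing (a3) to $n=0,1$.

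One small imprecision worth flagging: your displayed reduction replaces the deficiency $N^\pm$ of $A^\pm(\tau)$ by $M^\pm(a,b)$, which is exact only for the additive deficiency. For the multiplicative one,
$$
A^\pm(\tau)(ab)-A^\pm(\tau)(a)A^\pm(\tau)(b)=\operatorname{diag}(1,\tau)V\bigl(\Phi^\pm(ab)-\Phi^\pm(a)p(\tau)\Phi^\pm(b)\bigr)V^*\operatorname{diag}(1,\tau),
$$
so an extra term $(1-\tau^2)\Phi^\pm(a)(1-w)\Phi^\pm(b)$ appears in addition to $M^\pm(a,b)$. This does not affect your conclusion, since this extra term already carries a factor $(1-w)$ and is disposed of by the same mechanism as the other correction terms, but it should be stated to make the expansion honest.
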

\begin{proof}
Let $a,b\in A$, $c=ab$, and let $B^\pm=V\Phi^\pm (b)V^*$, $C^\pm=V\Phi^\pm (c)V^*$. As $(\Phi^+,\Phi^-)$ is an asymptotic $KK$-cycle, we have $C^+-A^+B^+\sim C^--A^-B^-$, which is equivalent to $C^+-C^-=(A^+-A^-)B^++A^-(B^+-B^-)$. Looking at the upper left corner and taking into account (\ref{zero}), we obtain $c^+_{11}-c^-_{11}\sim (a^+_{11}-a^-_{11})b^+_{11}+a^-_{11}(b^+_{11}-b^-_{11})$, whence 
\begin{equation}\label{z1}
c^+_{11}-a^+_{11}b^+_{11}\sim c^-_{11}-a^-_{11}b^-_{11}. 
\end{equation}
Replacing here $A,B,C$ by $A(\tau),B(\tau),C(\tau)$, we see that (\ref{z1}) does not depend on $\tau$.

Let $d\in A$, $D^\pm=V\Phi^\pm(d)V^*$. We know that $(C^+(\tau)-A^+(\tau)B^+(\tau))(D^+(\tau)-D^-(\tau))\sim 0$ when $\tau=1$.
Note that $D^+(\tau)-D^-(\tau)\sim \left(\begin{matrix}d^+_{11}-d^-_{11}&0\\0&0\end{matrix}\right)$,
$$
C^+(\tau)-A^+(\tau)B^+(\tau)=\left(\begin{matrix}c^+_{11}-a^+_{11}b^+_{11}-\tau^2a^+_{12}b^+_{21}&
\tau(c^+_{12}- a^+_{11}b^+_{12})-\tau^3 a^+_{12}b^+_{22}\\
\tau(c^+_{21}-a^+_{21}b^+_{11})-\tau^3a^+_{22}b^+_{12}&
\tau^2(c^+_{22}-a^+_{21}b^+_{12})-\tau^4a^+_{22}b^+_{22}
\end{matrix}\right).
$$
As $(1-w)^{1/2}(d^+_{11}-d^-_{11})\sim 0$, so $a^+_{ij}b^+_{kl}(d^+_{11}-d^-_{11})\sim 0$ for all $(ijkl)\neq (1111)$, therefore, $(D^+(\tau)-D^-(\tau))(C^+(\tau)-A^+(\tau)B^+(\tau))\sim 0$ for any $\tau$, and uniformly in $\tau$. This proves (h4). (h2) and (h3) are obvious.

\end{proof}


\begin{remark}
We did not use in Lemma \ref{lem11} that $w\in C_b(\mathbb R_+;B)$. It suffices to require that $w\in C_b(\mathbb R_+;MB))$, $w\Phi^\pm(a)\in C_b(\mathbb R_+;B)$, $[w,\Phi^\pm(a)]\sim 0$ and $w(\Phi^+(a)-\Phi^-(a))\sim \Phi^+(a)-\Phi^-(a)$ for any $a\in A$.

\end{remark}

\section{A map $EL(A,B)\to E(A,B)$}

Let $e_{ij}$, $i,j\in\mathbb Z$, denote the standard matrix units for $M(B\otimes\mathbb K)$. For an asymptotic $KK$-cycle $(\Phi^+,\Phi^-)$ set 
$$
\Phi(a)=\sum_{i\geq 0}\Phi^+(a)e_{ii}+\sum_{i<0}\Phi^-(a)e_{ii}, \quad T=\sum_i e_{i,i+1}.  
$$
Let $\widetilde E$ be the $C^*$-algebra generated by $\{\Phi(a):a\in A\}$ and by $T$, $\widetilde D=D\otimes\mathbb K$, where $D$ is the ideal in $E$ generated by $\Phi^+(a)-\Phi^-(a)$, $a\in A$, and $E$ is the $C^*$-algebra generated by $\Phi^\pm(A)$. Then $\widetilde D\subset \widetilde E\subset C_{b,s}(\mathbb R_+;MB)$, and $\widetilde D$ is an ideal in $\widetilde E$. 

Let $(v_r)_{r\in\mathbb R_+}$ be an approximate unit in $\widetilde D$, quasicentral in $\widetilde E$.

Let $\Phi_t=\ev_t\circ\Phi$,  $v_{r,t}=\ev_t(v_r)$, $M_t(a,b)=\ev_t(M(a,b))$, where $M(a,b)$ is either $\Phi(a+b)-\Phi(a)-\Phi(b)$ or $\Phi(ab)-\Phi(a)\Phi(b)$.

\begin{lem}\label{v}
There exists a reparametrization $r$ such that
\begin{itemize}
\item[(v1)]
$v\in C_b(\mathbb R_+;B\otimes\mathbb K)$;
\item[(v2)]
$f(v)x-x,xf(v)-x\in C_0(\mathbb R_+;B\otimes\mathbb K)$ for any $x\in\widetilde D$ and for any $f\in C_0(0,1]$;
\item[(v3)]
$f(v)y-yf(v)\in C_0(\mathbb R_+;B\otimes\mathbb K)$ for any $y\in \widetilde E$ and for any $f\in C[0,1]$;
\item[(v4)]
$f(v)M(a,b)\in C_0(\mathbb R_+;B\otimes\mathbb K)$ for any $a,b\in A$ and any $f\in C_0(0,1]$.

\end{itemize}
\end{lem}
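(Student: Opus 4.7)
The plan is to extend the diagonal reparametrization of Lemma~\ref{reparam0} so that it simultaneously controls all four asymptotic conditions. First I would replace the given approximate unit by a norm-continuous path in $r$ (piecewise linear interpolation, say), so that $(r,t)\mapsto v_r(t)$ is jointly continuous; then for any continuous increasing homeomorphism $r$ of $[0,\infty)$, the function $v(t)=v_{r(t)}(t)$ is automatically a norm-continuous, norm-bounded $B\otimes\mathbb K$-valued function. This handles (v1).

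Next, fix countable dense sequences $\{x_k\}\subset\widetilde D$, $\{y_k\}\subset\widetilde E$, $\{(a_k,b_k)\}\subset A\times A$, and a null sequence $\varepsilon_n\searrow 0$. For each fixed large index $R$, three asymptotic properties are in hand: $(v_R)$ approximates the identity on $\widetilde D$, is quasicentral in $\widetilde E$, and annihilates $M(a,b)$ on both sides---this last point follows from the asymptotic $KK$-cycle axiom (b3), tensored with $\mathbb K$, which yields $M(a,b)z,\,zM(a,b)\in C_0(\mathbb R_+;B\otimes\mathbb K)$ for every $z\in\widetilde D$, and in particular for $z=v_R$. Inductively I would pick $R_n\to\infty$ and cutoffs $T_n\to\infty$ so that on $[T_n,\infty)$ the four norms $\|v_{R_n}x_k-x_k\|$, $\|[v_{R_n},y_k]\|$, $\|v_{R_n}M(a_k,b_k)\|$ and $\|M(a_k,b_k)v_{R_n}\|$ are all smaller than $\varepsilon_n$ for $k\le n$. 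Any increasing homeomorphism $r$ with $r(T_n)=R_n$ then produces a $v$ for which $vx\sim x$, $[v,y]\sim 0$, and $vM(a,b)\sim 0\sim M(a,b)v$ hold on the chosen dense sets; density plus norm-continuity of multiplication extends them to all of $\widetilde D$, $\widetilde E$, and $A\times A$.

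To pass from the linear case $f(t)=t$ to arbitrary $f$, I would use telescoping identities such as
\[
v^n x - x = \sum_{j=0}^{n-1} v^j (vx-x), \qquad [v^n,y] = \sum_{j=0}^{n-1} v^j [v,y] v^{n-1-j}, \qquad v^n M(a,b) = v^{n-1}\bigl(v M(a,b)\bigr),
\]
which inductively transport the three asymptotic properties from $v$ to each power $v^n$, combined with uniform polynomial approximation on $[0,1]$ (without constant term when $f\in C_0(0,1]$) and continuity of the functional calculus on $[0,1]$. The main obstacle I anticipate is (v4): the diagonal element $v$ is not itself any single $v_R$, so one must verify that the absorption property $f(v)M(a,b)\in C_0$ is inherited from each $v_{R_n}M(a_k,b_k)\in C_0$. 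This forces the diagonal speed of $r$ to be balanced carefully---slow enough to keep $v$ norm-continuous in $t$, yet fast enough that the $C_0$ estimates at each scale $R_n$ survive the passage to the diagonal---and, when pushing the estimates through the functional calculus, one uses quasicentrality to swap $f(v)M(a,b)$ with $M(a,b)f(v)$ modulo $C_0$.
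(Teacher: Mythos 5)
Your overall plan — countable dense families, an inductive diagonalization, and telescoping/polynomial approximation to pass from $f(s)=s$ to general $f\in C_0(0,1]$ — matches the paper's strategy, and you correctly reduce everything to controlling the diagonal $v(t)=v_{r(t)}(t)$. But the inductive selection as you have written it leaves a genuine gap, and it is exactly the one you flag in your last paragraph without closing it. You choose $T_n$ so that $\|v_{R_n}x_k-x_k\|$, $\|[v_{R_n},y_k]\|$ and $\|M_t(a_k,b_k)v_{R_n}(t)\|$ are small on $[T_n,\infty)$ only at the single parameter value $r=R_n$, and then ask for a homeomorphism with $r(T_n)=R_n$. For $t\in(T_n,T_{n+1})$ the diagonal element is $v_{r(t)}(t)$ with $r(t)$ strictly between $R_n$ and $R_{n+1}$, and none of your estimates apply there; even with a piecewise-linearly interpolated approximate unit the convexity trick fails, since the bound for $v_{R_{n+1}}$ is only guaranteed on $[T_{n+1},\infty)$, not on $[T_n,T_{n+1}]$. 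The telescoping identities and the closed-ideal argument for general $f$ are fine, but they only transport control you already have on $v$ to $f(v)$; they cannot manufacture the missing control in between.

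The paper's fix is small but decisive: the inductive step demands the estimate \emph{uniformly over a compact interval of parameter values}, not at a single point. Concretely, for each $n$ one picks $T_n\ge t_n$ so that
\[
\sup_{t\ge T_n}\|M_t(a,b)\,v_r(t)\|<\tfrac{1}{n}
\]
holds simultaneously for all $a,b\in\{c_1,\ldots,c_n\}$ and all $r$ in a fixed compact interval (say $[0,n]$). Such a $T_n$ exists because each $M(a,b)v_r$ lies in $C_0(\mathbb R_+;B\otimes\mathbb K)$ (this is (b3) applied to $v_r\in\widetilde D$), $r\mapsto M(a,b)v_r$ is norm-continuous, and compactness of the $r$-interval makes the decay in $t$ uniform. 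With $r(T_n)=n$, every parameter value $r(t)$ that the diagonal visits then lies (up to a harmless index shift) inside a window on which the $C_0$-estimate was already secured, and $vM(a,b)\sim 0$ follows on the dense set; density and your functional-calculus argument then give (v4) in full. This compact-interval uniformity in $r$ is the single ingredient your draft is missing; (v1) and (v2) are automatic for any reparametrization, (v3) is inherited from Lemma \ref{reparam0} as you say, and the rest of what you wrote is consistent with the paper's proof.
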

\begin{proof}
Items (v1)-(v2) hold for any reparametrization. We have already shown how to satisfy (v3) in Lemma \ref{reparam0}, where a sequence $t_1,t_2,\ldots$ was constructed. To satisfy (v4), note that $M(a,b)v_r\sim 0$ for any $a,b\in A$ and any $r\in\mathbb R_+$. As $A$ is separable, fix a dense sequence $(c_n)_{n\in\mathbb N}\subset A$. For each $n\in\mathbb N$ find $T_n\in\mathbb R_+$ such that
$T_n\geq t_n$ and
$$
\sup_{t\geq T_n}\|M_t(a,b)v_r(t)\|<\frac{1}{n}
$$  
for $a,b\in\{c_1,\ldots,c_n\}$ and for any $r\in[0,n]$.
Then assigning $T_n$ to $n$, we get a reparametrization $r=r(t)$ such that $r(T_n)=n$. 

\end{proof}

For $a\in A$, $f\in C_0(0,1)$, $h_n(s)=e^{2\pi i ns}$, set 
\begin{equation}\label{alpha}
\alpha(\Phi^+,\Phi^-)_t(f\otimes h_n\otimes a)=f(v(t))T^n\Phi_t(a). 
\end{equation}

It follows from (v2)-(v4) that 
$$
\lim_{t\to\infty}\|\alpha(\Phi^+,\Phi^-)_t(fg\otimes h_{n+m}\otimes ab)-\alpha(\Phi^+,\Phi^-)_t(f\otimes h_n\otimes a)\alpha(\Phi^+,\Phi^-)_t(g\otimes h_m\otimes b)\|=0
$$
for any $f,g\in C_0(0,1)$, any $a,b\in A$ and any $n,m\in\mathbb N$, hence (\ref{alpha}) determines an asymptotic homomorphism $C_0(0,1)\otimes C(\mathbb S^1)\otimes A\to C_b(\mathbb R_+;B\otimes\mathbb K)$, which can be restricted to $C_0((0,1)^2)\otimes A$. We denote this asymptotic homomorphism by $\alpha(\Phi^+,\Phi^-)$, $\alpha(\Phi^+,\Phi^-)_t:S^2A\to C_b(\mathbb R_+;B\otimes\mathbb K)$.

It is clear that the map $\alpha:EL(A,B)\to E(A,B\otimes\mathbb K)$ doesn't depend on the choices of the approximate unit and of the approximation, as any two such choices can be connected by a linear homotopy.

\section{A map $E(A,B)\to EL(A,B)$}

Let $\varphi:S^2A\to C_b(\mathbb R_+;B)$ be a continuous map such that
$(\varphi_t)_{t\in\mathbb R_+}:S^2A\to B$ is an asymptotic homomorphism, where $\varphi_t=\ev\circ\varphi$. We identify here $S^2A$ with $C_0(\mathbb D;A)$, where $\mathbb D$ is the open unit ball on the complex plane. 

Let $(\alpha_i)_{i=0,1,2,\ldots}$ be a family of functions in $C_0[0,1)$ such that 
\begin{itemize}
\item[($\alpha$1)]
$\operatorname{supp}\alpha_0=[0,\frac{1}{2}]$ and $\alpha_0(0)=1$; 
\item[($\alpha$2)]
$\operatorname{supp}\alpha_i=[\frac{i-1}{i},\frac{i+1}{i+2}]$, $i=1,2,\ldots$;
\item[($\alpha$3)]
$\alpha_i\geq 0$ for any $i=0,1,2,\ldots$;
\item[($\alpha$4)]
$\sum_{i=0}^\infty\alpha_i^2=1$ (convergence here is pointwise).
\end{itemize}

Let $f:[0,1]\to[0,1]$ satisfy $f(1-t)=1-f(t)$ and $f(t)=0$ for $t\in[0,\frac{1}{10}]$, and let $g=\sqrt{f-f^2}$. Set 
$$
p_+=p_+(r,\phi)=\left(\begin{matrix}f(r)&g(r)e^{i\phi}\\g(r)e^{-i\phi}&1-f(r)\end{matrix}\right);\quad
p_-=p_-(r,\phi)=\left(\begin{matrix}f(r)&g(r)\\g(r)&1-f(r)\end{matrix}\right), 
$$
where $(r,\phi)$ are the polar coordinates on $\mathbb D$.
Note that $p_\pm\alpha_i\in M_2(C_0(\mathbb D))$ for any $i=0,1,2,\ldots$.

Set 
\begin{equation}\label{beta}
\Phi^\pm_t(a)=\sum_{i,j=0}^\infty \varphi_t(\alpha_i\alpha_j p_\pm\otimes a)\epsilon_{ij}\in M_2(C_{b,s}(\mathbb R_+;M(B\otimes\mathbb K))),
\end{equation}
where $\epsilon_{ij}$, $i,j=0,1,2,\ldots$, are the matrix units in $M(B\otimes\mathbb K)$ (later on, we shall use two copies of $\mathbb K$, so we have to use two different sets of matrix units --- $e_{ij}$ and $\epsilon_{ij}$).

Then the maps $\Phi^\pm_t$ are well-defined (the sum is strictly convergent), but are not necessarily asymptotic homomorphisms (because algebraic properties of $(\varphi_t)$ hold asymptotically in a non-uniform way). 

\begin{lem}
$(\Phi^+,\Phi^-)$ is an asymptotic $KK$-cycle. 
\end{lem}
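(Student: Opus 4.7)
The plan is to verify (a1), (b2), and (b3) of Definition 3 in turn, exploiting three algebraic features of the construction: exact linearity of $\varphi_t$, the projection identity $p_\pm^2 = p_\pm$, and the partition of unity $\sum_j \alpha_j^2 = 1$. The central structural observation is that $p_+ - p_-$ is proportional to $g(r)$ and hence supported in the annulus $r \in [1/10, 9/10]$, while $\operatorname{supp}\alpha_i \subset [(i-1)/i, 1)$ for $i \geq 1$, so $\alpha_i(p_+ - p_-) = 0$ for all sufficiently large $i$. Consequently
\[
\Phi^+(a) - \Phi^-(a) = \sum_{i,j=0}^{N}\varphi_t\bigl(\alpha_i \alpha_j (p_+ - p_-) \otimes a\bigr)\epsilon_{ij}
\]
is a finite sum lying in $C_b(\mathbb{R}_+; B \otimes \mathbb{K})$, which gives (b2). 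Condition (a1) follows directly: homogeneity is exact from linearity of $\varphi_t$, and for asymptotic involution, since $p_\pm$ is self-adjoint and the $\alpha_i$ are real, $(\alpha_i\alpha_j p_\pm \otimes a)^* = \alpha_i\alpha_j p_\pm \otimes a^*$, so asymptotic $*$-preservation of $\varphi$ together with the fact that only finitely many matrix units contribute to $(\Phi^\pm(a^*) - \Phi^\pm(a)^*)y$ for any $y \in B \otimes \mathbb{K}$ yields the required strict-topology convergence.

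The substantive step is (b3). Linearity of $\varphi_t$ makes the additive defect identically zero, so only $M^\pm(a,b) := \Phi^\pm(ab) - \Phi^\pm(a)\Phi^\pm(b)$ needs analysis. Using $\epsilon_{ij}\epsilon_{kl} = \delta_{jk}\epsilon_{il}$, $p_\pm^2 = p_\pm$, and $\sum_j \alpha_j^2 = 1$, the $(i,l)$-entry of $M^\pm(a,b)$ rewrites as a finite sum
\[
\sum_{j \in J(i,l)} \bigl[\varphi_t(XY) - \varphi_t(X)\varphi_t(Y)\bigr],\quad X = \alpha_i\alpha_j p_\pm \otimes a,\ Y = \alpha_j \alpha_l p_\pm \otimes b,
\]
where $J(i,l)$ is finite because $\alpha_j \alpha_l \ne 0$ forces $|j - l| \le 1$. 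Each summand vanishes as $t \to \infty$ by the asymptotic homomorphism property of $\varphi$, but the convergence is not uniform in $(i,l)$.

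Uniformity is restored by right multiplication by $x \in D$. For a generator $y = \Phi^+(c) - \Phi^-(c)$ the matrix support is bounded; sandwiching between $e, f \in E$ written as finite $*$-polynomials in $\Phi^\pm(A)$ enlarges that support only by a bounded amount, because each $\Phi^\pm(\cdot)$ propagates matrix indices by at most $\pm 1$ (from $|i-j| \le 1$ whenever $\alpha_i \alpha_j \ne 0$). Composing on the left with $M^\pm(a,b)$, whose entries live on the band $|i-l| \le 2$, produces a matrix with a fixed finite number of non-zero entries, each a finite sum of asymptotic multiplicativity errors of $\varphi$ tending to zero. Approximating a general $x \in D$ in norm by finite sums $\sum_k e_k y_k f_k$ and using boundedness of $M^\pm(a,b)$ then yields $M^\pm(a,b) \cdot x \in C_0(\mathbb{R}_+; B \otimes \mathbb{K})$. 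The principal obstacle I anticipate is precisely this index-propagation bookkeeping: confirming that the norm closure defining $D$ does not destroy the finite matrix support on which the uniform vanishing rests, and checking that the two matrix structures (the $M_2$ coming from $p_\pm$ and the $\mathbb{K}$ coming from $\epsilon_{ij}$) are handled consistently throughout.
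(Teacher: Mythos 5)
Your proposal is correct and matches the paper's own proof: both hinge on $(p_+-p_-)\alpha_i=0$ for $i\ge 10$ (so $\Phi^+-\Phi^-$ has finite matrix support, giving (b2)), on tri-diagonality of $\Phi^\pm$ from $\alpha_i\alpha_j=0$ when $|i-j|\ge 2$, and on $P_nM^\pm_t(a,b)P_n\to 0$ via $p_\pm^2=p_\pm$ and $\sum_j\alpha_j^2=1$, after which the finite-support and banded structure make the limit uniform on the region that survives multiplication by $x\in D$. One point worth tidying: your (a1) argument as written gives only strict-topology decay of $\Phi^\pm(a^*)-\Phi^\pm(a)^*$, whereas the definition asks for membership in $C_0(\mathbb R_+;B)$ (norm decay); the clean fix, already implicit in your appeal to ``exact linearity of $\varphi_t$'', is to normalize $\varphi$ at the outset to be linear and $*$-preserving (replacing $\varphi$ by $\tfrac12(\varphi(x)+\varphi(x^*)^*)$ composed with a linear section), which changes nothing up to homotopy and makes (a1) hold identically.
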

\begin{proof}
(a1) is trivial. As $p_+\alpha_i=p_-\alpha_i$ for $i\geq 10$, so (b2) holds. To check (b3), let $P_n=\sum_{i=0}^n e_{ii}$. Then, as $p_\pm$ are projections, so, for any $n\in\mathbb N$ and for any $a,b\in A$, one has 
\begin{equation}\label{comp1}
\lim_{t\to\infty}P_nM^\pm_t(a,b)P_n=0
\end{equation}
where $M^\pm_t(a,b)$ is either $\Phi^\pm_t(a+b)-\Phi^\pm_t(a)-\Phi^\pm_t(b)$ or $\Phi^\pm_t(ab)-\Phi^\pm_t(a)\Phi^\pm_t(b)$. As $p_+(r,\phi)=p_-(r,\phi)$ for $r\geq \frac{9}{10}$, so 
\begin{equation}\label{comp2}
(1-P_{10})(\Phi^+_t(a)-\Phi^-_t(a))(1-P_{10})=0
\end{equation}
for any $a\in A$.
Note that $\alpha_i\alpha_j=0$ when $|i-j|\geq 2$, so $\Phi^\pm(a)$ (\ref{beta}) has tri-diagonal form, therefore, combining 
(\ref{comp1}) and (\ref{comp2}), we get
$$
(\Phi^\pm(ab)-\Phi^\pm(a)\Phi^\pm(b))\Phi^+(c_1)\cdots\Phi^+(c_n)(\Phi^+(d)-\Phi^-(d))\in C_0(\mathbb R_+;M_2(B))
$$ 
for any $a,b,c_1,\ldots,c_n,d\in A$.

\end{proof}

This construction doesn't depend, up to homotopy, on the choice of the functions $\alpha_i$, $i=0,1,2,\ldots$, $f$, and on the choice of $\varphi$ within its class $[\varphi]\in E(A,B)$, so this gives us a map $\beta(\varphi)=(\Phi^+,\Phi^-)$, $\beta:E(A,B)\to EL(A,B\otimes\mathbb K)$.

\section{Isomorphism $EL(A,B)\cong E(A,B)$}

\begin{thm}
The maps $\alpha$ and $\beta$ are isomorphisms between $E(A,B)$ and $EL(A,B)$.

\end{thm}
\begin{proof}
{\bf Claim 1.} $\beta\circ\alpha:EL(A,B)\to EL(A,B\otimes\mathbb K\otimes\mathbb K\otimes M_2)$ coincides with the isomorphism induced by an isomorphism $B\cong B\otimes\mathbb K\otimes\mathbb K\otimes M_2$.

Set
$$
\Xi^\pm_t(a)=\beta\circ\alpha(\Phi^+,\Phi^-)_t(a)=\sum_{i,j=0}^\infty \alpha(\Phi^+,\Phi^-)_t(\alpha_i\alpha_jp_\pm\otimes a)\epsilon_{ij}
$$

{\bf First step of homotopy.}

Let $w_{r(t)}=\sum_{i=0}^\infty \lambda_i(r(t))\epsilon_{ii}$, where $r$ is a reparametrization (i.e. an order-preserving homeomorphism of $[0,\infty)$), and $\lambda_i$, $i=0,1,\ldots$, are continuous functions such that
\begin{itemize}
\item[($\lambda$1)]
$0\leq\ldots\leq\lambda_i(t)\leq\lambda_{i+1}(t)\leq\ldots\leq 1$ for any $t\in[0,\infty)$ and any $i=0,1,\ldots$;
\item[($\lambda$2)]
for any $t\in[0,\infty)$ there is $N\in\mathbb N$ such that $\lambda_i(t)=0$ for any $i\geq N$;
\item[($\lambda$3)]
for any $i=0,1,\ldots$ $\lim_{t\to\infty}\lambda_i(t)=1$;
\item[($\lambda$4)]
$\lim_{t\to\infty}\lambda_{i+1}(t)-\lambda_i(t)=0$ for any $i=0,1,\ldots$;
\end{itemize}

Note that tri-diagonal structure of $\Xi^\pm$ and ($\lambda$4) imply that 
\begin{equation}\label{slow}
\lim_{t\to\infty}[w_{r(t)},\Xi^\pm_t(a)]=0
\end{equation}
for any $a\in A$ for an appropriately chosen reparametrization. In fact, any sufficiently slow reparametrization would do. We shall fix it later. 

By ($\lambda$2), $w_r(t)\Xi^\pm_t(a)$ is norm-continuous in $t$ ($\Xi^\pm$ is only strictly continuous).

Tri-diagonality of $\Xi^\pm$ and ($\lambda$4) imply that
\begin{equation}\label{slow}
\lim_{t\to\infty}[w_{r(t)},\Xi^\pm_t(a)]=0
\end{equation}
for any $a\in A$,
and ($\lambda$3) implies that  
$$
\lim_{t\to\infty}w_{r(t)}(\Xi^+_t(a)-\Xi^-_t(a))-(\Xi^+_t(a)-\Xi^-_t(a))=0
$$ 
for any $a\in A$. 
All these hold for any reparametrization $r$. Therefore, there exists a homotopy connecting $(\Xi^+,\Xi^-)$ with $(w\Xi^+,w\Xi^-)$ (and with $(w\Xi^+w,w\Xi^-w)$) as in Theorem \ref{Lemma7} (see Lemma \ref{lem11} and Remark after it).

Set $u=1-v$, $u=(u_t)_{t\in\mathbb R_+}$, where an approximate unit $(v_r)_{r\in\mathbb R_+}$ and a reparametrization $r$ are as in Lemma \ref{v}; 
set
$$
p_+(u,T)=\left(\begin{matrix}f(u)&g(u)T\\T^*g(u)&1-f(u)\end{matrix}\right),\quad p_-(u,T)=\left(\begin{matrix}f(u)&g(u)\\g(u)&1-f(u)\end{matrix}\right);
$$
and set 
$$
\Psi^\pm_t(a)=w_{r(t)}\sum_{|i-j|\leq 1} \alpha_i(u_t)p_\pm(u_t,T)\Phi_t(a)\alpha_j(u_t)\epsilon_{ij}\ w_{r(t)}.
$$

Note that 
$$
\lim_{t\to\infty}\alpha(\Phi^+,\Phi^-)_t(\alpha_i\alpha_jp_\pm\otimes a)-\alpha_i(u_t)p_\pm(u_t,T)\Phi_t(a)\alpha_j(u_t)=0
$$
for any $i,j\in\mathbb N$ and for any $a\in A$, and 
$$
\alpha(\Phi^+,\Phi^-)_t(\alpha_i\alpha_jp_\pm\otimes a)=0
$$
if $|i-j|>1$, so we have
$$
\lim_{t\to\infty}\Psi^\pm_t(a)-w_{r(t)}\Xi^\pm_t(a)w_{r(t)}=0,
$$
and the asymptotic $KK$-cycles $(\Xi^+,\Xi^-)$ and $(\Psi^+,\Psi^-)$ are homotopic.

{\bf Second step of homotopy.}

Set $\gamma(\tau)=\sqrt{(1-\tau)+\tau f(u)}$, $\delta(\tau)=\sqrt{1-\gamma^2(\tau)}$, $\tau\in[0,1]$; $s=\gamma(1)$, $c=\delta(1)$. Note that $\gamma(0)=1$, $\delta(0)=0$. Set 
$$
U_+(\tau)=\left(\begin{matrix}\gamma(\tau)&-\delta(\tau)T\\T^*\delta(\tau)&\gamma(\tau)\end{matrix}\right),\quad 
U_-(\tau)=\left(\begin{matrix}\gamma(\tau)&-\delta(\tau)\\\delta(\tau)&\gamma(\tau)\end{matrix}\right),
$$
\begin{equation}\label{psi}
\Psi^\pm(a)(\tau)=w_{r(t)}\sum_{|i-j|\leq 1}\alpha_i(u_t)U_\pm^*(\tau)p_\pm(u_t,T)\Phi_t(a)U_\pm(\tau)\alpha_j(u_t)\epsilon_{ij}\ w_{r(t)}, \quad \tau\in[0,1].
\end{equation}
Now, let us fix a reparametrization.

\begin{lem}\label{reparam}
Let $\varphi:A\to C_{b,s}(\mathbb R_+;MB)$ be a continuous map, let $v\in C_b(\mathbb R_+;MB)$ satisfy $0\leq v\leq 1$ and 
\begin{equation}\label{ascomm}
\lim_{t\to\infty}[v_t,\varphi_t(a)]=0
\end{equation}
for any $a\in A$. Let $\beta_i$, $i=0,1,\ldots$, be the functions defined by $\beta_i(x)=\alpha_i(1-x)$, $x\in[0,1]$. \\
1) There exists a reparametrization $r$ such that 
$$
\lim_{t\to\infty}[\varphi_t(a),\sum_{i=0}^\infty\beta_i^2(\tau v_t+(1-\tau)1)\lambda_i^2(r(t))]=0, \quad \tau\in[0,1],
$$ 
for any $a\in A$, uniformly in $\tau$;
2) For any reparametrization $r$ and for any $i=0,1,\ldots$ one has
$$
\lim_{t\to\infty}\beta_i(v_t)\sum_{i=0}^\infty\beta_i^2(v_t)\lambda_i^2(r(t))-\beta_i(u_t)=0.
$$
\end{lem}
\begin{proof}
1) Let $a_1,a_2,\ldots$ be a dense sequence in $A$, $\varepsilon_1>\varepsilon_2>\ldots>0$, $\lim_{j\to\infty}\varepsilon_j=0$. Set $g_\rho(x)=\sum_{i=0}^\infty \beta_i^2(x)\lambda_i^2(\rho)$, $x\in[0,1]$, $\rho\in[0,\infty)$. We may think of $\varphi(a)$ and the function $\tilde v$ given by $\tau\mapsto v(\tau)=\tau v+(1-\tau)1$ as elements of $C_b(\mathbb R_+;MB)\otimes C[0,1]$. Let $q:C_b(\mathbb R_+;MB)\otimes C[0,1]\to C_b(\mathbb R_+;MB)/C_0(\mathbb R_+;MB)\otimes C[0,1]$ be the quotient map. Then (\ref{ascomm}) implies that
$q(\varphi(a))$ and $q(\tilde v)$ commute. Then $q(\varphi(a))$ commutes with $q(g(\tilde v))=g(q(\tilde v))$ for any $g\in C[0,1]$, hence
$\lim_{t\to\infty}[\varphi_t(a),g(v_t(\tau))]=0$ for any $a\in A$ uniformly in $\tau$. 

Let $k_1,k_2,\ldots$ be a sequence of numbers, $k_i\geq 1$, $i\in\mathbb N$.
Start with $\rho=1$ and find $t_1\in[0,\infty)$ such that $[g_1(v_t(\tau)),\varphi_t(a_1)]<\varepsilon_1$ for any $t\geq t_1$ and any $\tau\in[0,1]$. Then take $\rho=2$ and find $t_2\in[0,\infty)$ such that $t_2\geq t_1+k_1$ and $[g_2(v_t(\tau)),\varphi_t(a_i)]<\varepsilon_2$ for $i=1,2$, for any $t\geq t_2$ and for any $\tau\in[0,1]$. Proceeding infinitely, we obtain a map $\rho\mapsto t_\rho$ when $\rho\in\mathbb N$. Extend this map to a homeomorphism on $[0,\infty)$, and take its inverse as $r$.

2) $\beta_i(v_t)\sum_{i=0}^\infty\beta_i^2(v_t)\lambda_i^2(r(t))=\beta_i(v_t)g_{r(t)}(v_t)$, so the claim follows from the fact that $g_r$ is an approximate unit in $C_0(0,1]$.

\end{proof}

When finding a reparametrization, we should keep in mind that it should be sufficiently slow to satisfy also (\ref{slow}). This can be done by adjusting the numbers $k_i$, $i\in\mathbb N$.

Note that 
\begin{equation}\label{u_contin}
U_\pm\in C_b(\mathbb R_+;IMB)
\end{equation}
 and
\begin{equation}\label{unitary}
U_\pm^*U_\pm-1\sim 0,\quad U_\pm U_\pm^*-1\sim 0
\end{equation}
uniformly in $\tau$, as $[T,\gamma(\tau)]\sim 0$, $[T,\delta(\tau)]\sim 0$ uniformly in $\tau\in[0,1]$ by (v3).

\begin{lem}
(\ref{psi}) is a homotopy of asymptotic $KK$-cycles.

\end{lem}
\begin{proof}
We have to check (h1)-(h4) from Lemma \ref{homotopy-property}.
(h1) is trivial. (h2) and (h3) follow from (\ref{u_contin}). To check (h4), set $M^\tau_\pm(a,b)=\Psi^\pm(ab)(\tau)-\Psi^\pm(a)(\tau)\Psi^\pm(b)(\tau)$, $M_\pm(a,b)=\Phi_\pm(ab)-\Phi_\pm(a)\Phi_\pm(b)$, $a,b\in A$. Let $c\in A$. By (\ref{unitary}) and Lemma \ref{reparam}, using that $\lim_{t\to\infty}[\alpha_i(u_t),U_\pm(\tau)]=0$ uniformly in $\tau$ for any $i=0,1,\ldots$, $\lim_{t\to\infty}[\alpha_i(u_t),\Phi^\pm_t(a)]=0$, and $\lim_{t\to\infty}[p_\pm(u_t,T),\Phi_t(a)]=0$ we have
$$
M^\tau_+(a,b)(\Psi^+(c)-\Psi^-(c))\sim w\sum_{|i-j|\leq 1}\alpha_i(u)U_+(\tau)p_+(u,T)M_+(a,b)U_+^*(\tau)
$$ 
\begin{equation}\label{second}
\cdot(U_+(\tau)p_+(u,T)\Phi(c)U_+^*(\tau)-U_-(\tau)p_-(u,T)\Phi(c)U_-^*(\tau))\alpha_j(u)\epsilon_{ij}\ w.
\end{equation}
Let us write the second multiplicand (\ref{second}) as
$$
U_+(\tau)p_+(u,T)\Phi(c)U_+^*(\tau)-U_-(\tau)p_-(u,T)\Phi(c)U_-^*(\tau)
$$ 
$$
=U_+(\tau)(p_+(u,T)-p_-(u,T))\Phi(c)U_+^*(\tau)
$$
$$
+U_+(\tau)p_-(u,T)\Phi(c)(U_+^*(\tau)-U_-^*(\tau))
$$
$$
+(U_+(\tau)-U_-(\tau))p_-(u,T)\Phi(c)U_-^*(\tau).
$$
Then it would suffice to show that $M_+(a,b)(p_+(u,T)-p_-(u,T))$, $M_+(a,b)p_-(u,T)\Phi(c)(U_+^*(\tau)-U_-^*(\tau))$ and 
$M_+(a,b)U_+^*(\tau)(U_+(\tau)-U_-(\tau))$ vanish as $t\to\infty$ uniformly in $\tau$.

Note that $p_+(u,T)-p_-(u,T)=\left(\begin{matrix}0&g(u)(T-1)\\(T^*-1)g(u)&0\end{matrix}\right)$, where $g(0)=g(1)=0$, hence, by (v4) $M_+(a,b)g(u)\in C_0(\mathbb R_+;B)$.

To deal with the two remaining terms, note that 
$$
U_+(\tau)-U_-(\tau)=\left(\begin{matrix}0&-\delta(\tau)(T-1)\\(T^*-1)\delta(\tau)&0\end{matrix}\right),
$$
and that $\delta(\tau)=\sqrt{1-\gamma^2(\tau)}=\sqrt{\tau(1-f(u))}=\sqrt{\tau f(v)}$, so $M_+(a,b)\delta(\tau)\sim 0$ by (v4) uniformly in $\tau$.

As
$$
p_-(u,T)(U_+^*(\tau)-U_-^*(\tau))=\left(\begin{matrix}-g(u)(T^*-1)\delta(\tau)&f\delta(\tau)(T-1)\\
-(1-f(u))(T^*-1)\delta(\tau)&g(u)\delta(\tau)(T-1)\end{matrix}\right)
$$
$$
\sim \left(\begin{matrix}-g(u)\delta(\tau)(T^*-1)&f\delta(\tau)(T-1)\\
-(1-f(u))\delta(\tau)(T^*-1)&g(u)\delta(\tau)(T-1)\end{matrix}\right)
$$
uniformly in $\tau$ by (v3), so, by (v4) $M_+(a,b)p_-(u,T)(U_+^*(\tau)-U_-^*(\tau))\sim 0$ uniformly in $\tau$. 
By (v4),
$$
[\Phi(c),U_+^*(\tau)-U_-^*(\tau)]\sim\left(\begin{matrix}0&\delta(\tau)[\Phi(c),T]\\\delta(\tau)[T^*,\Phi(c)]&0\end{matrix}\right)
$$
uniformly in $\tau$, so 
\begin{eqnarray*}
M_+(a,b)p_-(u,T)\Phi(c)(U_+^*(\tau)-U_-^*(\tau))&=&M_+(a,b)p_-(u,T)(U_+^*(\tau)-U_-^*(\tau))\Phi(c)\\
&+&M_+(a,b)p_-(u,T)[\Phi(c),U_+^*(\tau)-U_-^*(\tau)]\sim 0
\end{eqnarray*}
uniformly in $\tau$.

The last summand, $M_+(a,b)U_+^*(\tau)(U_+(\tau)-U_-(\tau))$, vanishes as $t\to\infty$ uniformly in $\tau$ by similar argument. So, $M^\tau_+(a,b)(\Psi^+(c)-\Psi^-(c))\sim 0$ uniformly in $\tau$. Similarly one can show that the same holds also for $M^\tau_-(a,b)(\Psi^+(c)-\Psi^-(c))$ and for $M^\tau_\pm(a,b)\Psi^\pm(c)(\Psi^+(d)-\Psi^-(d))$.


\end{proof}

Thus, we have a homotopy connecting $(\Psi^+(0),\Psi^-(0))$ with $(\Psi^+(1),\Psi^-(1))$. As $U_\pm(0)=\left(\begin{matrix}1&0\\0&1\end{matrix}\right)$, so $\Psi^\pm(0)=\Psi^\pm$. For $\tau=1$, $\gamma(1)=s=\sqrt{f(u)}$, $\delta(1)=c=\sqrt{1-f(u)}$, and both $s$ and $c$ asymptotically commute with $\Phi(a)$, i.e. $[s,\Phi(a)]\sim 0$, $[c,\Phi(a)]\sim 0$. Then 
$$
U^*_+(1)p_+(u,T)\Phi(a)U_+(1)-\left(\begin{matrix}s^2\Phi(a)+c^2T\Phi(a)T^*&sc(T\Phi(a)-\Phi(a)T)\\
sc(\Phi(a)T^*-T^*\Phi(a))&0\end{matrix}\right)\sim 0.
$$
As $s[T,\Phi(a)]\sim 0$, we can skip the off-diagonal elements. Similarly, 
$$
s^2\Phi(a)+c^2T\Phi(a)T^*=s^2(\Phi(a)-T\Phi(a)T^*)+(s^2+c^2)T\Phi(a)T^*,
$$ 
hence the first summand also vanishes as $t\to\infty$, so
$$
U^*_+(1)p_+(u,T)\Phi(a)U_+(1)-\left(\begin{matrix}T\Phi_t(a)T^*&0\\0&0\end{matrix}\right)\sim 0,
$$
therefore, $\Psi^+_t(a)(1)=w_{r(t)}\sum_{|i-j|\leq 1} \alpha_i(u_t)\left(\begin{matrix}T\Phi(a)T^*&0\\0&0\end{matrix}\right)\alpha_j(u_t)\epsilon_{ij}\ w_{r(t)}$.

Similarly, we have $\Psi^-_t(a)(1)=w_{r(t)}\sum_{|i-j|\leq 1}\alpha_i(u_t)\left(\begin{matrix}\Phi_t(a)&0\\0&0\end{matrix}\right)\alpha_j(u_t)\epsilon_{ij}\ w_{r(t)}$.

{\bf Third step of homotopy.}

Set $u(\tau)=1-((1-\tau)v+\tau 1)$. This is the linear homotopy connecting $u=u(0)=1-v$ with $0$.

Set 
$$
\Psi^+_t(a)(\tau)=w_{r(t)}\sum_{|i-j|\leq 1}\alpha_i(u_t(\tau))\left(\begin{matrix}T\Phi_t(a)T^*&0\\0&0\end{matrix}\right)\alpha_j(u_t(\tau))\epsilon_{ij}\ w_{r(t)};
$$ 
$$
\Psi^-_t(a)(\tau)=w_{r(t)}\sum_{|i-j|\leq 1}\alpha_i(u_t(\tau))\left(\begin{matrix}\Phi_t(a)&0\\0&0\end{matrix}\right)\alpha_j(u_t(\tau))\epsilon_{ij}\ w_{r(t)}.
$$ 

Norm-continuity of $\alpha_i(u_t(\tau))$ in $(t,\tau)$ for each $i=0,1,\ldots$ implies norm-continuity of $\Psi^\pm(a)$ in $\tau$ for each $t\in[0,\infty)$, and strict continuity in $t$ uniformly in $\tau$, so (h2) holds for $\Psi^\pm(a)$. (h3) 
follows from $\lim_{i\to\infty}\sup_{\tau\in[0,1]}\|\alpha_i(u(\tau)(T\Phi(a)T^*-\Phi(a))\|=0$. Finally, (h4) follows from
Lemma \ref{reparam}, so $(\Psi^+(\tau),\Psi^-(\tau))$ is an asymptotic $KK$-cycle from $A$ to $IB\otimes\mathbb K^2$.

When $\tau=1$, $u_t(1)=0$, hence $\alpha_0(u_t(1))=1$, $\alpha_i(u_t(1))=0$ for any $i\geq 1$. As $\lambda_0(t)=1$, we get
$$
\Lambda^+_t(a)=\Psi^+_t(a)(1)=\left(\begin{matrix}T\Phi_t(a)T^*&0\\0&0\end{matrix}\right)\epsilon_{00},\quad
\Lambda^-_t(a)=\Psi^-_t(a)(1)=\left(\begin{matrix}\Phi_t(a)&0\\0&0\end{matrix}\right)\epsilon_{00}.
$$

We can write $(\Lambda^+,\Lambda^-)=(T\Phi T^*\oplus 0,\Phi\oplus 0)=\left(\left(\begin{matrix}\Phi^+&0\\0&\Phi'\end{matrix}\right),\left(\begin{matrix}\Phi^-&0\\0&\Phi'\end{matrix}\right)\right)$, where $\Phi'$ is the infinite direct sum of copies of $\Phi^+$, $\Phi^-$ and $0$. By Lemma \ref{zero}, $[(\Lambda^+,\Lambda^-)]=[(\Phi^+,\Phi^-)]$.
\qed

{\bf Claim 2.} $\alpha\circ\beta:E(A,B)\to E(A,B\otimes\mathbb K\otimes\mathbb K\otimes M_2)$ coincides with the isomorphism induced by an isomorphism $B\cong B\otimes\mathbb K\otimes\mathbb K\otimes M_2$.

Let $\varphi:S^2A\to C_b(\mathbb R;B)$ represent an element of $E(A,B)$, and let $\Phi^\pm$ be defined as in (\ref{beta}). Denote $\alpha\circ\beta(\varphi)$ by $\psi:S^2A\to C_b(\mathbb R_+;B)$. Then 
$$
\psi(f\otimes h_n\otimes a)=f(1-v)T^n\Phi(a),
$$
where $\Phi=\sum_{i\geq 0}\Phi^+ e_{ii}+\sum_{i<0}\Phi^- e_{ii}$, $f\in C_0(0,1)$, $h(x)=e^{2\pi i x}$, and $v\in D\otimes\mathbb K\subset C_b(\mathbb R_+;B)$ satisfies
\begin{itemize}
\item
$\lim_{t\to\infty}f(v)d=0$ for any $d\in D\otimes\mathbb K$;
\item
$\lim_{t\to\infty}f(v)M(a,b)=0$ for any $a,b\in A$, where $M(a,b)$ is either $\Phi(ab)-\Phi(a)\Phi(b)$
or $\Phi(a+b)-\Phi(a)-\Phi(b)$.
\end{itemize}

Let $\phi^\pm(g\otimes a)=\sum_{i,j\geq 0}\varphi(\alpha_i\alpha_jg\otimes a)\epsilon_{ij}$, where $(\epsilon_{ij})_{i,j=0,1,2\ldots}$ is a set of matrix units for $M(B\otimes\mathbb K)$ and $g\otimes a\in \Sigma^2A=C(\mathbb S^2;A)$, and let $\phi^\pm_\infty(g\otimes a)=\sum_{i=0}^\infty \phi^\pm(g\otimes a)e_{ii}$. ($e_{ij}$ are the matrix units in $M(B\otimes\mathbb K\otimes\mathbb K)$ with respect to the second copy of $\mathbb K$.) Set $W=\sum_{i=0}^\infty e_{i,i+1}$. 

Let $E$ be the $C^*$-subalgebra of $C_{b,s}(\mathbb R_+;M(B\otimes\mathbb K^2))$ generated by $\phi^\pm_\infty(\Sigma^2A)$ and by $W$, and let $D\subset E$ be the ideal generated by $\phi^+_\infty(g\otimes a)-\phi^-_\infty(g\otimes a)$, $g\otimes a\in\Sigma^2A$, and by $\phi^+(\Sigma^2A)e_{00}$. Let $(w_r)_{r\in[0,\infty)}$ be a quasicentral aproximate unit in $D$. Without loss of generality, we may assume that $w_r$ is diagonal, $w_r=\sum_{i=0}^\infty w_{r;i}e_{ii}$. As before, we may find a reparametrization $r(t)$ such that, for any $f\in C_0(0,1)$ one has 
\begin{itemize}
\item[(w1)]
$\lim_{t\to\infty}f(w_{r(t),t})d(t)=0$ for any $d\in D$;
\item[(w2)]
$\lim_{t\to\infty}[f(w_{r(t),t}),e(t)]=0$ for any $e\in E$, 
\end{itemize}
where $w_{r(t),t}=w_{r(t)}(t)$.

Set $u'_t=1-\sum_{i\in\mathbb Z}w_{r(t),t;|i|}e_{ii}$, $u'=\sum_{i\in\mathbb Z}u'_{i}e_{ii}$, and 
$$
\psi'(f\otimes h_n\otimes a)=f(u')T^n\Phi(a).
$$
It is easy to check that $\psi'$ defines an element of $[[S^2A,B]]$, and the linear homotopy $\tau u+(1-\tau)u'$ shows that $\psi'$ is homotopic to $\psi$.

Let us write $\psi'$ as a matrix with respect to the decomposition $1=\sum_{i=1}^\infty e_{ii}+e_{00}+\sum_{i=-1}^{-\infty}e_{ii}$,
$$
f(u')=f\left(\begin{matrix}\sum_{i=1}^\infty u'_ie_{ii}&0&0\\0&u'_0e_{00}&0\\0&0&\sum_{i=-1}^{-\infty}u'_ie_{ii}\end{matrix}\right);
$$
$$
T=\left(\begin{matrix}\sum_{i=1}^\infty e_{i,i+1}&0&0\\e_{0,1}&0&0\\0&e_{-1,0}&\sum_{i=-2}^{-\infty}e_{i,i+1}\end{matrix}\right);
$$
$$
\Phi(a)=\left(\begin{matrix}\sum_{i=1}^\infty \phi^+(a)e_{ii}&0&0\\0&\phi^+(a)e_{00}&0\\0&0&\sum_{i=-1}^{-\infty}\phi^-(a)e_{ii}\end{matrix}\right).
$$

Set also
$$
f(u')_0=f\left(\begin{matrix}\sum_{i=1}^\infty u'_ie_{ii}&0&0\\0&0&0\\0&0&\sum_{i=-1}^{-\infty}u'_ie_{ii}\end{matrix}\right);
$$
$$
T_0=\left(\begin{matrix}\sum_{i=1}^\infty e_{i,i+1}&0&0\\0&0&0\\0&0&\sum_{i=-2}^{-\infty}e_{i,i+1}\end{matrix}\right);
$$
$$
\Phi(a)_0=\left(\begin{matrix}\sum_{i=1}^\infty \phi^+(a)e_{ii}&0&0\\0&0&0\\0&0&\sum_{i=-1}^{-\infty}\phi^-(a)e_{ii}\end{matrix}\right).
$$


\begin{lem}
$f(u')T^n\Phi(a)-f(u')_0T^n_0\Phi(a)_0\in C_0(\mathbb R_+;B\otimes\mathbb K^2)$.

\end{lem}
\begin{proof}
This follows from the definition of $u'$ (see (w2)). 

\end{proof} 

Set $\psi''(f\otimes h_n\otimes a)=f(u')_0T^n_0\Phi(a)_0$. Then $\psi''$ is homotopic to $\psi$, and can be written as a direct sum, $\psi''=\psi_1\oplus\psi_2\oplus 0$, where 
$$
\psi_1(f\otimes h_n\otimes a)=f(\tilde u)W^n\phi^+_\infty(a),
$$
$$
\psi_2(f\otimes h_n\otimes a)=f(\tilde u)(W^*)^n\phi^-_\infty(a),
$$ 
$\tilde u=\sum_{i=1}^\infty u'_ie_{ii}$. Therefore, $[\psi]=[\psi_1]+[\psi_2]=[\psi_1]-[\psi_0]$, where
$$
\psi_0(f\otimes h_n\otimes a)=f(\tilde u)W^n\phi^-_\infty(a).
$$

Let us recall the construction of the Bott isomorphism in $E$-theory. Let $u_0\in C_b(\mathbb R_+;M(B\otimes\mathbb K)$ be defined by $u_0=\sum_{i=1}^\infty a_i(t)e_{ii}$, where $t\in\mathbb R_+$ and $a_i(t)=\min\{\frac{t}{i},1\}$. For $g,g'\in C_0(\mathbb D^2)$, and for $\varphi:S^2A\to C_b(\mathbb R_+;B)$, $[\varphi]\in [[S^2A,B]]$, set 
$$
\mbox{Bott}(\varphi)(g\otimes g'\otimes a)=g'(u_0,W)\varphi_\infty(a),\qquad \mbox{Bott}(\varphi)\in[[S^4A,B]].
$$

Consider the commuting diagram
\begin{equation}\label{diagram1}
\begin{xymatrix}{
[[S^2\Sigma^2A,B]]\ar[r]^-{i^*}\ar[d]_-{\mathcal B}&[[S^4A,B]],\\
[[S^2A,B]]\ar[ur]_-{\mbox{Bott}}&
}\end{xymatrix}
\end{equation}
where $i^*$ is induced by the canonical inclusion $S^2A\subset \Sigma^2A$, $\mathcal B(\chi)(g\otimes a)=\chi(g\otimes p_+\otimes a)\oplus \chi(\overline{g}\otimes p_-\otimes a)$, where $g\in S^2\mathbb C$, $\overline{g}(re^\phi)=g(re^{-\phi})$ in polar coordinates, $p_\pm\in\Sigma^2\mathbb C$, $a\in A$.

Note that $\psi_k=\mathcal B(\chi_k)$, $k=0,1$, where
$$
\chi_0(g\otimes g'\otimes a)=g(\tilde u,W)\sum_{i,j=0}^\infty\varphi(\alpha_i\alpha_jp_+g'\otimes a)\epsilon_{ij},
$$
$$
\chi_1(g\otimes g'\otimes a)=g(\tilde u,W)\sum_{i,j=0}^\infty\varphi(\alpha_i\alpha_jp_-g'\otimes a)\epsilon_{ij},
$$
where $g'\in C(\mathbb D^2)$.
Thus, $[\psi]=[\mathcal B(\tilde\chi)]$,
where 
$$
\tilde\chi(g\otimes g'\otimes a)=g(\tilde u,W)\sum_{i,j=0}^\infty\varphi(\alpha_i\alpha_jg'\otimes a)\epsilon_{ij}.
$$
The linear homotopy connecting $\tilde u$ with $u_0$ shows that $[\psi]=[\mbox{Bott}(\hat\varphi)]$, where 
$$
\hat\varphi(g'\otimes a)=\sum_{i,j=0}^\infty\varphi(\alpha_i\alpha_jg'\otimes a)\epsilon_{ij}.
$$  
Finally, $[\hat\varphi]=[\varphi]$ by a homotopy similar to the homotopy via the functions $\alpha^\tau$ as in ($\alpha$1')-($\alpha$4'). 

So, we have $[\psi]=[\mathcal B(\tilde\chi)]$ and $[i^*(\tilde\chi)]=[\mbox{Bott}(\varphi)]$. Since Bott is an isomorphism, we conclude from (\ref{diagram1}) that $[\psi]=[\varphi]$. 


\end{proof}

\section{Surjections and asymptotic $KK$-cycles}

Let $A$ be a separable $C^*$-algebra. Consider all surjective $*$-homomorphisms $p:C\to A$, where $C$ is a separable $C^*$-algebra, with a partial order given by $(C,p)\leq (C',p')$ if there exists a surjective $*$-homomorphism $\lambda:C'\to C$ such that $p'=p\circ\lambda$. Denote the set of all such $C$ by $\mathcal E_A$. Abusing the notation, we shall write $C$ in place of $(C,p)$. The set $\mathcal E_A$ is directed. Indeed, if $p_1:C_1\to A$, $p_2:C_2\to A$ are surjections then the pull-back $C=\{(c_1,c_2):c_1\in C_1,c_2\in C_2,p_1(c_1)=p_2(c_2)\}$ obviously surjects onto $C_1$, $C_2$ and $A$, hence satisfies $C\geq C_1$, $C\geq C_2$. 

This directed set has a minimal and a maximal elements. The minimal element is $C=A$, and the maximal element was constructed in \cite{Cuntz2}, Section 2, under the name of {\it universal extension}.

Let $C\in\mathcal E_A$, and let $J=\Ker p$. Let $EN(C,J;B)$ be the semigroup of homotopy equivalence classes of pairs $(\psi^+,\psi^-)$ of asymptotic homomorphisms from $C$ to $B$ that agree on $J$, with the semigroup structure defined using stability of $B$. This semigroup need not be a group, as $(\psi^-,\psi^+)$ may not be the inverse for $(\psi^+,\psi^-)$. Set $EN(A,B)=\injlim_{C\in\mathcal E_A}EN(C,J;B)$. 

\begin{lem}
$EN(A,B)$ is a group.

\end{lem}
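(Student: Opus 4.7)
The goal is, for every class $[(\psi^+,\psi^-)]\in EN(A,B)$ represented by a pair over $(C,p)\in\mathcal E_A$, to exhibit an inverse in $EN(A,B)$. The natural candidate is $[(\psi^-,\psi^+)]\in EN(C,J;B)$, and the plan is a two-step argument.

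First, the standard rotation argument used for $EL(A,B)$ transfers. By stability, $[(\psi^+,\psi^-)]+[(\psi^-,\psi^+)]$ is the class of $(s_1\psi^+s_1^*+s_2\psi^-s_2^*,\,s_1\psi^-s_1^*+s_2\psi^+s_2^*)$. Take a norm-continuous path $u_\tau\in MB$ of unitaries implementing, via the isometries $s_1,s_2$, the $M_2(\mathbb C)$-rotation $\left(\begin{smallmatrix}\cos\tau&-\sin\tau\\\sin\tau&\cos\tau\end{smallmatrix}\right)$, and conjugate the second component by $u_\tau$; at $\tau=\pi/2$ both components become the same asymptotic homomorphism $\chi:=s_1\psi^+s_1^*+s_2\psi^-s_2^*$. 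The point to verify is that $J$-agreement is preserved throughout: on $J$ one has $\psi^+(j)=\psi^-(j)$, so the inside of the conjugation is the amplification $s_1\psi^+(j)s_1^*+s_2\psi^+(j)s_2^*$, which commutes with the $u_\tau$. This yields
$$[(\psi^+,\psi^-)]+[(\psi^-,\psi^+)]=[(\chi,\chi)]\quad\text{in }EN(C,J;B).$$

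Second, I need to show $[(\chi,\chi)]=0$ in $EN(A,B)$. Since both components of $(\chi,\chi)$ are equal, the $J$-agreement condition is automatic along any homotopy that keeps the two components equal, so it suffices to produce a null-homotopy of $\chi$ as an asymptotic homomorphism after passing to a larger $C'\in\mathcal E_A$. My plan is to enlarge to a cylinder-type algebra $C':=\{f\in C([0,1],C):p\circ f\text{ is constant}\}$, which surjects onto $A$ by $f\mapsto p(f(0))$ and onto $C$ by $\mathrm{ev}_0$, with $J'=\{f:p\circ f\equiv 0\}$. On $C'$ one has the diagonal family $\tau\mapsto(\chi\circ\mathrm{ev}_\tau,\chi\circ\mathrm{ev}_\tau)$, which is a homotopy in $EN(C',J';IB)$ connecting the pullback of $(\chi,\chi)$ to $(\chi\circ\mathrm{ev}_1,\chi\circ\mathrm{ev}_1)$. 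One then composes with a standard null-homotopy of the latter, obtained by embedding $\chi$ into an infinite amplification $\chi^\infty:C'\to B\otimes\mathbb K\cong B$ via stability and applying an Eilenberg-swindle shift to drift the support to infinity, which gives $[(\chi,\chi)]=0$ in the colimit.

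The main obstacle is exactly this second step: making rigorous the null-homotopy of the diagonal pair inside the $\mathcal E_A$-directed system. The scaling trick $(\tau\chi,\tau\chi)$ that trivializes $(\Phi,\Phi)$ in Lemma \ref{zero} for $EL(A,B)$ breaks here because $\tau\chi$ is not an asymptotic homomorphism for $\tau<1$, and the cylinder construction only takes one to $(\chi\circ\mathrm{ev}_1,\chi\circ\mathrm{ev}_1)$ rather than to $(0,0)$ directly. The genuine technical content lies in choosing $C'$ and the approximate-unit/shift data so that the Eilenberg-swindle absorption stays within pairs of asymptotic homomorphisms agreeing on the relevant kernel $J'$.
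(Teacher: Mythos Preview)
Your first step (the rotation argument reducing to the claim that a diagonal pair $(\chi,\chi)$ is zero in $EN(A,B)$) is correct and matches the paper exactly. The gap is entirely in the second step.

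Your cylinder $C'=\{f\in C([0,1],C):p\circ f\text{ constant}\}$ only buys you a homotopy from $(\chi\circ\ev_0,\chi\circ\ev_0)$ to $(\chi\circ\ev_1,\chi\circ\ev_1)$, which is no progress at all: both are pullbacks of the same nontrivial diagonal pair. You then appeal to an Eilenberg swindle on an ``infinite amplification'' $\chi^\infty$, but this is not rigorous here. An infinite direct sum of copies of a single asymptotic homomorphism is in general not an asymptotic homomorphism (the asymptotic relations are not uniform over the summands), so $\chi^\infty$ need not lie in the category. You yourself flag this as ``the genuine technical content,'' and indeed it is not carried out.

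The paper's argument replaces your cylinder by the \emph{cone}: take $Cone(C)=C_0((0,1];C)$ with the surjection $p\circ\ev_1:Cone(C)\to A$, so $Cone(C)\in\mathcal E_A$ and $Cone(C)\geq C$. The image of $(\chi,\chi)$ in $EN(Cone(C),J';B)$ is $(\chi\circ\ev_1,\chi\circ\ev_1)$. Since $Cone(C)$ is contractible (via $H_s(f)(t)=f(st)$), the single asymptotic homomorphism $\chi\circ\ev_1$ is null-homotopic through asymptotic homomorphisms, and running this homotopy diagonally gives a path of pairs $(\chi\circ\ev_1\circ H_s,\chi\circ\ev_1\circ H_s)$ in which both entries are equal at every $s$, hence trivially agree on $J'$. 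This lands at $(0,0)$ with no swindle needed. The missing idea in your argument is precisely this: enlarge to a \emph{contractible} $C'\in\mathcal E_A$ rather than a cylinder.
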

\begin{proof}
Due to the standard rotation argument, it suffices to prove that $(\psi,\psi)$ represents the zero element in $EN(A,B)$, where $C\in\mathcal E_A$, and $\psi$ is an asymptotic homomorphism from $C$ to $B$. Let $Cone(C)$ be the cone over $C$, $ev_1:Cone(C)\to C$ the evaluation map at 1, then $p\circ\ev_1:Cone(C)\to A$ is a surjection, so $Cone(C)\in\mathcal E_A$, $Cone(C)\geq C$. As $Cone(C)$ is contractible, $\psi\circ\ev_1$ is homotopic to zero, hence the pair $(\psi,\psi)$ is zero in $EN(A,B)$. 

\end{proof}

Let $C\in\mathcal E_A$, $(\psi^+,\psi^-)\in EN(C,J;B)$. 
Let $s:A\to C$ be a $\mathbb C$-homogeneous $*$-respecting continuous map such that $p(s(a))=a$ for any $a\in A$, which exists by \cite{Bartle-Graves}. Set $\Phi^\pm(a)=\psi^\pm(s(a))$, $a\in A$. 
\begin{lem}
The pair $(\Phi^+,\Phi^-)$ is a compact asymptotic $KK$-cycle.

\end{lem}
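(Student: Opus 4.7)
The plan is to verify conditions (a1) and (a3) of Definition \ref{Def2} for the pair $(\Phi^+,\Phi^-)$, since by the Proposition this is equivalent to being a compact asymptotic $KK$-cycle (and the image lies automatically in $C_b(\mathbb R_+;B)$ because $\psi^\pm$ take values in $B$).

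Condition (a1) is immediate from the hypotheses on $s$ and $\psi^\pm$. Indeed, for $\lambda\in\mathbb C$, $\Phi^\pm(\lambda a)=\psi^\pm(s(\lambda a))=\psi^\pm(\lambda s(a))=\lambda\Phi^\pm(a)$ by $\mathbb C$-homogeneity of $s$ and (asymptotic) $\mathbb C$-linearity of $\psi^\pm$. For the involution, $\Phi^\pm(a^*)=\psi^\pm(s(a)^*)\sim\psi^\pm(s(a))^*=\Phi^\pm(a)^*$, using that $s$ is $*$-respecting and $\psi^\pm$ are asymptotically involutive.

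For (a3) the central observation is that although $s$ is not a $*$-homomorphism, its failure lies in the kernel $J$: setting $j_+(a,b):=s(ab)-s(a)s(b)$ and $j_\circ(a,b):=s(a+b)-s(a)-s(b)$, we have $p(j_+(a,b))=p(j_\circ(a,b))=0$, hence both lie in $J$. Applying the asymptotic-homomorphism properties of $\psi^\pm$,
\eQ{
M^\pm(a,b)=\psi^\pm(s(ab))-\psi^\pm(s(a))\psi^\pm(s(b))\sim\psi^\pm(j_+(a,b)),
}
and similarly $M^\pm(a,b)\sim\psi^\pm(j_\circ(a,b))$ in the additive case. More generally, multiplying by $\Phi^\pm(c_1)\cdots\Phi^\pm(c_n)=\psi^\pm(s(c_1))\cdots\psi^\pm(s(c_n))$ and pushing the product inside $\psi^\pm$ asymptotically,
\eQ{
M^\pm(a,b)\Phi^\pm(c_1)\cdots\Phi^\pm(c_n)\sim \psi^\pm\bigl(j(a,b)\,s(c_1)\cdots s(c_n)\bigr),
}
where $j(a,b)$ is $j_+(a,b)$ or $j_\circ(a,b)$ as appropriate. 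Since $J$ is a two-sided ideal, the element $j(a,b)\,s(c_1)\cdots s(c_n)$ belongs to $J$, so $\psi^+$ and $\psi^-$ agree asymptotically on it. Consequently,
\eQ{
M^+(a,b)\Phi^+(c_1)\cdots\Phi^+(c_n)-M^-(a,b)\Phi^-(c_1)\cdots\Phi^-(c_n)\sim 0,
}
which is (a3) for every $n\geq 0$.

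The argument is essentially routine once one has the key algebraic identity $s(ab)-s(a)s(b)\in J$ (and its additive analogue). The only mildly delicate step is propagating the pointwise asymptotic equalities through the $n$-fold products while keeping the final expression inside $\psi^\pm$ applied to an element of $J$; this requires appealing iteratively to the asymptotic multiplicativity of $\psi^\pm$, but creates no genuine obstacle because each individual factor is a fixed element of $C$ and the limit $t\to\infty$ is taken after the multiplication is performed. Finally, since each $\psi^\pm$ lands in $C_b(\mathbb R_+;B)$, so does each $\Phi^\pm$, and the cycle is indeed compact.
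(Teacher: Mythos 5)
Your proof is correct and follows essentially the same route as the paper's: the key observation in both is that $s(ab)-s(a)s(b)$ (and its additive analogue) lies in $J=\Ker p$, that multiplying on the right by $s(c_1)\cdots s(c_n)$ keeps the element in the ideal $J$, and that $\psi^+$ and $\psi^-$ agree on $J$, so after pushing all factors inside $\psi^\pm$ via asymptotic multiplicativity the difference in (a3) vanishes. The paper presents this more tersely (declaring (a1) obvious and writing the chain of $\sim$-equivalences directly), but the mechanism is identical.
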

\begin{proof}
The property (a1) is obvious, so we need to check (a3). Note that 
$$
(\Phi^\pm(ab)-\Phi^\pm(a)\Phi^\pm(b))\Phi^\pm(c_1)\cdots\Phi^\pm(c_n)
$$
$$
=(\psi^\pm(s(ab))-\psi^\pm(s(a))\psi^\pm(s(b)))\psi^\pm(s(c_1))\cdots\psi^\pm(s(c_n))
$$ 
$$
\sim(\psi^\pm(s(ab)-s(a)s(b)))\psi^\pm(s(c_1))\cdots \psi^\pm(s(c_n))
$$
$$
\sim \psi^\pm((s(ab)-s(a)s(b))s(c_1)\cdots s(c_n)),
$$
and that $s(ab)-s(a)s(b)\in J$, so $(s(ab)-s(a)s(b))s(c_1)\cdots s(c_n)\in J$, and as $\psi^+$ and $\psi^-$ agree on $J$, so we are done. 

\end{proof}

If $s,s':A\to C$ are two maps satisfying the above assumptions then the linear homotopy connecting $s$ and $s'$ gives a homotopy between the corresponding compact asymptotic $KK$-cycles, so the above construction gives a well-defined map
$$
\gamma_C:EN(C,J;B)\to EM(A,B),\quad \gamma_C(\psi^+,\psi^-)=(\psi^+\circ s,\psi^-\circ s).
$$
Let $\kappa:C'\to C$ be a surjection, $p'=p\circ\kappa:C'\to A$, $J'=\Ker p'$. Then we have $\gamma_{C'}=\kappa^*\circ\gamma_C$, where the map $\kappa^*:EN(C,J;B)\to EN(C',J';B)$ is induced by $\kappa$, and we may pass to the direct limit.

Let us check homotopy invariance of $\gamma$. Let $[(\psi^+_i,\psi^-_i)]$, $i=0,1$, where $\psi^\pm_i$ are asymptotic homomorphisms from $C$ to $B$, represent the same element in $EN(A,B)$. This means that there exists some $C'\geq C$ in $\mathcal E_A$ and a homotopy $(\varphi^+,\varphi^-)$, where $\varphi^\pm$ are asymptotic homomorphisms from $C'$ to $IB$ such that $\ev_i\circ\varphi^\pm=\psi^\pm_i\circ\kappa$, $i=0,1$, and $\varphi^+$ coinsides with $\varphi^-$ on $\Ker(C'\to A)$. Let $s':C\to C'$ be a continuous section for the surjection $\kappa$. Then $\gamma_{C'}(\varphi^+,\varphi^-)=(\varphi^+\circ s'\circ s,\varphi^-\circ s'\circ s)$. As 
$$
\ev_i\circ\varphi^\pm\circ s'\circ s=\psi^\pm_i\circ\kappa\circ s'\circ s=\psi^\pm_i\circ s, \quad i=0,1,
$$ 
we see that the map $\gamma:EN(A,B)\to EM(A,B)$ is well defined as the direct limit of the maps $\gamma_C$. 


Now let us construct a map $\delta:EM(A,B)\to EN(A,B)$. Let $[(\Phi^+,\Phi^-)]\in EM(A,B)$. Recall that $E\subset C_b(\mathbb R_+;B)$ is the $C^*$-algebra generated by $\Phi^\pm(A)$, and that the compact asymptotic $KK$-cycle $(\Phi^+,\Phi^-)$ determines an ideal $D\subset E$ generated by $\Phi^+(a)-\Phi^-(a)$, $a\in A$. It also determines another ideal $K\subset E$ generated by $\Phi^\pm(a+b)-\Phi^\pm(a)-\Phi^\pm(b)$ and $\Phi^\pm(ab)-\Phi^\pm(a)\Phi^\pm(b)$, $a,b\in A$. By definition of asymptotic $KK$-cycles, $KD\subset C_0(\mathbb R_+;B)$, hence $K\cap D\subset C_0(\mathbb R_+;B)$. 

Let $E_+$ (resp. $E_-$) be the $C^*$-algebra generated by $\Phi^+(A)$ (resp. by $\Phi^-(A)$). Denote the quotient map $C_b(\mathbb R_+;B)\to C_b(\mathbb R_+;B)/C_0(\mathbb R_+;B)$ by $Q$, and for a $C^*$-subalgebra $C\subset C_b(\mathbb R_+;B)$ set $\underline{C}=Q(C)$. Then $\underline{D}$ and $\underline{K}$ are ideals in $\underline{E}$ with $\underline{D}\cap\underline{K}=0$, and $\underline{E}$ is generated by $\underline{E}_+$ and $\underline{E}_-$.

\begin{lem}\label{surj}
For a compact asymptotic $KK$-cycle $(\Phi^+,\Phi^-)$ there exists $[\psi]\in EN(A,B)$ such that $\gamma([\psi])=[(\Phi^+,\Phi^-)]$.

\end{lem}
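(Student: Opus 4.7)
The plan is to realize $A$ as a quotient of a separable $C^*$-algebra $C$ on which $(\Phi^+,\Phi^-)$ lifts to a pair of genuine $*$-homomorphisms into $\underline B:=C_b(\mathbb R_+;B)/C_0(\mathbb R_+;B)$ that coincide on the kernel of the surjection $C\to A$. The essential input is the orthogonality already extracted in the excerpt: $\underline K\cdot\underline D=0$, hence $\underline K\cap\underline D=0$, inside $\underline E\subset\underline B$.

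Concretely, I would take
\[
C:=\{(x^+,x^-,a)\in \underline E\oplus \underline E\oplus A : x^\pm-Q\Phi^\pm(a)\in\underline K\ \text{and}\ x^+-x^-\in\underline D\},
\]
with $p:C\to A$ the projection onto the last coordinate, and $\psi^\pm:C\to\underline E\subset\underline B$ the projections onto the first two. The verifications needed are: (i) $C$ is a (separable) $C^*$-subalgebra of $\underline E\oplus\underline E\oplus A$, the main point being closure under multiplication: rewriting
\[
x_1^\pm x_2^\pm-Q\Phi^\pm(a_1a_2)=(x_1^\pm-Q\Phi^\pm(a_1))x_2^\pm+Q\Phi^\pm(a_1)(x_2^\pm-Q\Phi^\pm(a_2))+(Q\Phi^\pm(a_1)Q\Phi^\pm(a_2)-Q\Phi^\pm(a_1a_2))
\]
puts each summand in $\underline K$, while $x_1^+x_2^+-x_1^-x_2^-\in\underline D$ since $\underline D$ is an ideal of $\underline E$; (ii) $p$ is surjective, because $a\mapsto(Q\Phi^+(a),Q\Phi^-(a),a)$ is a preferred lift; and (iii) $\ker p=\{(k,k,0):k\in\underline K\}$, using $\underline K\cap\underline D=0$, so $\psi^+$ and $\psi^-$ automatically agree on $\ker p$.

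The map $s:A\to C$, $s(a):=(Q\Phi^+(a),Q\Phi^-(a),a)$, is then a $\mathbb C$-homogeneous, continuous, $*$-respecting section of $p$ (the $*$-respecting property uses (a1)), and satisfies $\psi^\pm\circ s=Q\Phi^\pm$. Consequently $\gamma_C(\psi^+,\psi^-)$ is represented by the pair $(Q\Phi^+,Q\Phi^-)$, which is the same class in $EM(A,B)$ as $(\Phi^+,\Phi^-)$.

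I expect the main obstacle to be bookkeeping rather than anything analytic: checking that $C$ is closed in norm and separable, and that the identification between $*$-homomorphisms $C\to\underline B$ and classes of asymptotic homomorphisms $C\to B$ places $(\psi^+,\psi^-)$ in $EN(C,J;B)$ compatibly with the direct limit defining $EN(A,B)$. Beyond the orthogonality $\underline K\cap\underline D=0$ already extracted from Definition~\ref{Def1}(a2), no further structural input should be required.
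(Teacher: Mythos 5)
Your construction is essentially identical to the paper's proof. The paper defines $C_\Phi$ as the set of triples $(a,e_+,e_-)$ with $a\in A$, $q(e_\pm)=\underline\Phi^\pm(a)$ (i.e.\ $e_\pm - Q\Phi^\pm(a)\in\underline K$) and $r(e_+)=r(e_-)$ (i.e.\ $e_+-e_-\in\underline D$), identifies $\ker p=\{(0,m,m):m\in\underline K\}$ exactly as you do from $\underline K\cap\underline D=0$, lifts the two coordinate projections through a continuous Bartle--Graves selection $\sigma:\underline E\to E$ to obtain the asymptotic homomorphisms $\psi^\pm$, and uses the same section $s(a)=(a,Q\Phi^+(a),Q\Phi^-(a))$ with $\psi^\pm\circ s\sim\Phi^\pm$. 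The only divergence is that the paper requires $e_\pm\in\underline E_\pm$ (the subalgebras generated by $\Phi^+(A)$ and $\Phi^-(A)$ individually) rather than in all of $\underline E$; this makes no difference to the present lemma, but it makes the coordinate projections $p_\pm$ surjective onto $\underline E_\pm$, a fact the paper invokes later when proving $\delta\circ\gamma=\mathrm{id}$.
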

\begin{proof}

Let $q:\underline{E}\to \underline{E}/\underline{K}$, $r:\underline{E}\to \underline{E}/\underline{D}$. 
Note that the compositions 
$$
\begin{xymatrix}{
\underline{\Phi}^\pm:A\ar[r]^-{\Phi^\pm}&E\ar[r]^-{Q}&\underline{E}\ar[r]^-{q}&\underline{E}/\underline{K}
}\end{xymatrix}
$$
are genuine $*$-homomorphisms.
 Set
$$
C_\Phi=\{(a,e_+,e_-):a\in A,e_\pm\in \underline{E}_\pm, \underline{\Phi}^\pm(a)=q(e_\pm),r(e_+)=r(e_-)\}.
$$
This is a $C^*$-algebra that surjects onto $A$, $p(a,e_+,e_-)=a$. It has also two surjections, $p_+(a,e_+,e_-)=e_+$, $p_-(a,e_+,e_-)=e_-$, onto $\underline{E}_+$ and $\underline{E}_-$ respectively. Let $\sigma:\underline{E}\to E$ be a continuous Bartle--Graves selection map. As $p_\pm$ are $*$-homomorphisms, $\psi^\pm=\sigma\circ p_\pm:C_\Phi\to E$ are asymptotic homomorphisms. 

Note that 
$$
J_\Phi=\Ker p=\{(0,e_+,e_-):e_\pm\in \underline{K},e_+-e_-\in\underline {D}\}=\{(0,m,m):m\in \underline{K}\},
$$ 
as $\underline{D}\cap\underline{K}=0$.
Then $\psi^+|_{J_\Phi}=\psi^-|_{J_\Phi}$. Define the map 
$$
s:A\to C_\Phi \quad\mbox{by}\quad s(a)=(a,Q(\Phi^+(a)),Q(\Phi^-(a))).   
$$
Then $s$ is a continuous right inverse for $p$, and $\psi^\pm(s(a))\sim\Phi^\pm(a)$. 

If $C\in\mathcal E_A$, $C\geq C_\Phi$, then there is a $*$-homomorphism $\kappa:C\to C_\Phi$, and we can define asymptotic homomorphisms $\psi_C^\pm:C\to B$ by $\psi_C^\pm=\psi^\pm\circ\kappa$. As $\psi_C^+|_J=\psi_C^-|_J$, where $J=\Ker p$, $p:C\to A$, so $(\psi_C^+,\psi_C^-)$ represents an element of $EN(C,J;B)$, and $\gamma([(\psi^+,\psi^-)])=[(\Phi^+,\Phi^-)]$. \qed

\end{proof}

Set $\delta(\Phi^+,\Phi^-)=(\psi^+,\psi^-)$, and let us check homotopy invariance of $\delta$.
Let $(\Psi^+,\Psi^-)$ be a compact asymptotic $KK$-cycle from $A$ to $IB$, $\Phi^\pm_i=\ev_i\circ\Psi^\pm$, $i=0,1$.
Apply the construction of Lemma \ref{surj} to this compact asymptotic $KK$-cycle. Let $E_I\subset C_b(\mathbb R_+;IB)$ be the $C^*$-algebra generated by $\Psi^\pm(A)$,and let $K_I\subset E_I$ be the ideal generated by $\Phi^\pm(a+b)-\Phi^\pm(a)-\Phi^\pm(b)$ and $\Phi^\pm(ab)-\Phi^\pm(a)\Phi^\pm(b)$, $a,b\in A$. Similarly, Let $K_i\subset E_i\subset C_b(\mathbb R_+;B)$ be the corresponding ideal and $C^*$-subalgebra for $\ev_i\circ\Phi^\pm$, and let underlining denotes the image of these $C^*$-algebras in $C_b(\mathbb R_+;IB)/C_0(\mathbb R_+;IB)$ and in $C_b(\mathbb R_+;B)/C_0(\mathbb R_+;B)$ respectively.

Let $q_I:\underline{E}_I\to\underline{E}_I/\underline{K}_I$, $q_i:\underline{E}_i\to\underline{E}_i/\underline{K}_i$, $r_I:\underline{E}_I\to\underline{E}_I/\underline{J}_I$, $r_i:\underline{E}_i\to\underline{E}_i/\underline{J}_i$ be the quotient maps, and let, as in Lemma \ref{surj}, $\underline{\Psi}^\pm:A\to \underline{E}_I/\underline{K}_I$ be the $*$-homomorphisms induced by $\Psi^\pm$. Let $C_\Psi$ and $C_{\Phi_i}$, $i=0,1$, be the $C^*$-algebras in $\mathcal E_A$ constructed as in Lemma \ref{surj}, with surjections $p_I:C_{\Psi}\to A$ and $p_i:C_{\Phi_i}\to A$. Note that $\ev_i:\underline{E}_I\to\underline{E}_i$ is surjective, and $\ev_i(\underline{K}_I)=\underline{K}_i$, hence $\ev_i$ gives rise to a surjective $*$-homomorphism $\widetilde{\ev}_i:\underline{E}_I/\underline{K}_I\to\underline{E}_i/\underline{K}_i$.

This gives a commuting diagram
$$
\begin{xymatrix}{
C_\Psi\ar[rrr]^-{p_\pm^\Psi}\ar[d]_-{\kappa_i}&&&\underline{E}_I\ar[ld]_-{\ev_i}\ar[dr]^-{q_I}&&\\
C_{\Phi_i}\ar[rr]^-{p_\pm^{\Phi_i}}\ar[d]_-{p_i}&&\underline{E}_i\ar[rd]^-{q_i}&&\underline{E}_I/\underline{K}_I\ar[dl]_-{\widetilde{\ev_i}}\\
A\ar[urrrr]_-{\underline{\Psi}^\pm}\ar[rrr]_-{\underline{\Phi}^\pm_i}&&&\underline{E}_i/\underline{K}_i,&
}\end{xymatrix}
$$
where $\kappa_i:C_\Psi\to C_{\Phi_i}$ is given by $\kappa_i(a,e_+,e_-)=(a,\ev_i(e_+),\ev_i(e_-))$. Recall that $(a,e_+,e_-)\in C_\Psi$ if $a\in A$, $e_\pm\in\underline{E}_I$, $\underline{\Psi}^\pm(a)=q_I(e_\pm)$, $r_I(e_+)=r_I(e_-)$. 

Let $\sigma_I:\underline{E}_I\to E_I$, $\sigma_i:\underline{E}_i\to E_i$. Then 
$\delta(\Psi^+,\Psi^-)=(\psi^+,\psi^-)$, where $\psi^\pm=p_\pm^\Psi\circ\sigma_I$, $\delta(\Phi^+_i,\Phi^-_i)=(\varphi^+_i,\varphi^-_i)$, where $\varphi^\pm_i=p_\pm^{\Phi_i}\circ\sigma_i$. But the asymptotic homomorphisms $\ev_i\circ\psi^\pm$ are asymptotically equivalent to $\varphi^\pm_i\circ\kappa$, $i=0,1$, so $[(\varphi^+_0,\varphi^-_0)]=[(\varphi^+_1,\varphi^-_1)]$. Thus, the map $\delta:EM(A,B)\to EN(A,B)$ is well-defined.

\begin{thm}
The map $\gamma:EN(A,B)\to EM(A,B)$ is an isomorphism.

\end{thm}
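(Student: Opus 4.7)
The map $\delta:EM(A,B)\to EN(A,B)$ has been constructed above and its homotopy invariance verified; combined with Lemma \ref{surj} this gives $\gamma\circ\delta=\id_{EM(A,B)}$, proving surjectivity of $\gamma$. It remains to verify $\delta\circ\gamma=\id_{EN(A,B)}$.

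Given $[(\psi^+,\psi^-)]\in EN(C,J;B)$ with section $s:A\to C$, set $\Phi^\pm=\psi^\pm\circ s$ and $(\tilde\psi^+,\tilde\psi^-)=\delta([(\Phi^+,\Phi^-)])\in EN(C_\Phi,J_\Phi;B)$. I must show $[(\psi^+,\psi^-)]=[(\tilde\psi^+,\tilde\psi^-)]$ in $EN(A,B)=\injlim_{C'\in\mathcal E_A}EN(C',J';B)$. My plan is to find a common dominator $C'\in\mathcal E_A$ equipped with surjections $\pi_C:C'\to C$ and $\pi_\Phi:C'\to C_\Phi$, and to construct a homotopy in $EN(C',J';B)$ between the pulled-back pairs $(\psi^\pm\circ\pi_C)$ and $(\tilde\psi^\pm\circ\pi_\Phi)$.

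The fibered product $C'=C\times_A C_\Phi$ is the natural first candidate. On $(c,(a,e_+,e_-))\in C'$ with $a=p(c)$, the two pullbacks evaluate to $\psi^\pm(c)$ and $\sigma(e_\pm)$ respectively; both are lifts to $C_b(\mathbb R_+;B)$ of the common element $\underline{\Phi^\pm}(a)\in\underline E/\underline K$, since the asymptotic homomorphism $\psi^\pm$ induces a genuine $*$-homomorphism $\underline{\psi^\pm}:C\to\underline{\widetilde E}_\pm$ with $\widetilde E_\pm=C^*(\psi^\pm(C))\supset E_\pm$, coinciding on $s(A)$ with $\underline{\Phi^\pm}$ and identifying $\underline{\psi^\pm}(c)$ with $e_\pm$ modulo the ideal $\underline{\widetilde K}$ generated by $\underline{\psi^\pm}(J)$. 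The crucial algebraic input is $\underline{\widetilde K}\cdot\underline{\widetilde D}=0$, where $\widetilde D$ is generated by $\underline{\psi^+}(c)-\underline{\psi^-}(c)$; this follows from $\psi^+(j)(\psi^+(c)-\psi^-(c))\sim\psi^+(jc)-\psi^-(jc)\sim 0$ for $j\in J$ and the fact that $J$ is an ideal in $C$.

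The main obstacle is that the naive linear interpolation $(1-\tau)\psi^\pm(c)+\tau\sigma(e_\pm)$ produces only a compact asymptotic $KK$-cycle from $C'$ to $IB$, not an asymptotic homomorphism pair: its multiplicative deficiency lies in $\underline{\widetilde K}$ rather than in $C_0(\mathbb R_+;B)$. To land in $EN(\cdot,\cdot;IB)$ I would enlarge $C_\Phi$ to the analogously defined $\widehat C_\Phi$ built from $\underline{\widetilde E}_\pm$, $\underline{\widetilde K}$ and $\underline{\widetilde D}$; the non-degeneracy $\underline{\widetilde D}\cap\underline{\widetilde K}=0$ (proved exactly as $\underline D\cap\underline K=0$ in Lemma \ref{surj}) guarantees $\widehat C_\Phi\in\mathcal E_A$. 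The natural $*$-homomorphism $c\mapsto(p(c),\underline{\psi^+}(c),\underline{\psi^-}(c))$ from $C$ to $\widehat C_\Phi$ exhibits $(\psi^+,\psi^-)$ as arising from $\widehat C_\Phi$ via a Bartle--Graves section, while $\widehat C_\Phi$ is compared to $C_\Phi$ via their common fibered product over $A$. Combined with the homotopy invariance of $\delta$, this yields the desired identification $[(\psi^+,\psi^-)]=[(\tilde\psi^+,\tilde\psi^-)]$ in $EN(A,B)$, completing the proof that $\gamma$ is an isomorphism.
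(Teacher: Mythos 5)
Your route is genuinely different from the paper's, and is left unfinished at its key step. The paper's argument for $\delta\circ\gamma=\id$ is direct: given $(\varphi^+,\varphi^-)\in EN(C,J;B)$ and $\Phi^\pm=\varphi^\pm\circ s$, it defines $\kappa:C\to C_\Phi$ by $\kappa(c)=(p(c),[\varphi^+](c),[\varphi^-](c))$ and argues that $\kappa$ is a surjective $*$-homomorphism over $A$, so that $C\geq C_\Phi$ in $\mathcal E_A$. Since $p_\pm\circ\kappa=[\varphi^\pm]$, lifting by the Bartle--Graves section $\sigma$ shows $\varphi^\pm$ is asymptotically equivalent to $\psi^\pm\circ\kappa$ where $(\psi^+,\psi^-)=\delta(\Phi^+,\Phi^-)$, and the desired identity holds already at the level of representatives at $C_\Phi\leq C$ in $\mathcal E_A$, with no homotopy of asymptotic $KK$-cycles needed.

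You instead try to build a common dominator $C'=C\times_A C_\Phi$ and connect the two pullbacks by a linear homotopy. You correctly observe that this does not produce a pair of asymptotic homomorphisms from $C'$ to $IB$: the interpolated maps have multiplicative deficiency living in the ideal generated by $[\psi^\pm](J)$, not in $C_0(\mathbb R_+;B)$. Your proposed repair --- pass to an enlarged $\widehat C_\Phi$ built from $\underline{\widetilde E}_\pm=[\psi^\pm](C)$ and then ``compare $\widehat C_\Phi$ to $C_\Phi$ via their common fibered product over $A$'' --- is stated but not carried out, and this comparison is exactly where the argument would have to do all the remaining work; the final sentence just asserts the conclusion. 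As it stands this is a genuine gap, and the detour through a homotopy is, in any case, unnecessary once one notices the surjection $\kappa$. That said, your care in distinguishing $E_\pm=C^*(\Phi^\pm(A))$ from $\widetilde E_\pm=C^*(\psi^\pm(C))$, and $\underline K$ from the ideal generated by $[\psi^\pm](J)$, does touch a point the paper's one-line formula for $\kappa$ passes over quickly: for $\kappa$ to land in $C_\Phi$ as literally defined one needs $[\varphi^\pm](C)\subset\underline E_\pm$ and $[\varphi^\pm](J)\subset\underline K$, which is not immediate for an arbitrary section $s$. The clean resolution, in keeping with the paper's route rather than yours, is to allow the algebra $E$ in the construction of $C_\Phi$ to be any separable $C^*$-subalgebra of $C_b(\mathbb R_+;B)$ containing $\Phi^\pm(A)$ (different choices give comparable elements of $\mathcal E_A$), and to take $E\supset\varphi^\pm(C)$ when proving $\delta\circ\gamma=\id$; then $\kappa$ is manifestly well defined and surjective and the paper's short argument goes through without any fibered products or interpolating homotopies.
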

\begin{proof}
Lemma \ref{surj} proves surjectivity of $\gamma$, so it suffices to show that $\delta\circ\gamma$ is the identity map. 

Let $C\in\mathcal E_A$, $\varphi^\pm$ be asymptotic homomorphisms from $C$ to $B$ equal on $J=\Ker(C\to A)$. Then $\gamma(\varphi^+,\varphi^-)=(\Phi^+,\Phi^-)$, where the latter is a compact asymptotic $KK$-cycle defined by $\Phi^\pm=\varphi^\pm\circ s$, where $s:A\to C$ is a continuous section.
Using notation from Lemma \ref{surj}, we have two commuting diagrams
$$
\begin{xymatrix}{
C\ar[r]^-{[\varphi^\pm]}\ar[d]_-{p}&\underline{E}\ar[d]^-{q}\\
A\ar[r]_-{\underline{\Phi}^\pm}&\underline{E}/\underline{K}
}\end{xymatrix}\qquad\mbox{and}\qquad
\begin{xymatrix}{
C_\Phi\ar[r]^-{p_\pm}\ar[d]_-{p_\Phi}&\underline{E}\ar[d]^-{q}\\
A\ar[r]_-{\underline{\Phi}^\pm}&\underline{E}/\underline{K}
}\end{xymatrix}
$$
where $[\varphi^\pm]$ denotes the composition of $\varphi^\pm$ with the quotient map $Q:E\to\underline{E}$.
There exists a $*$-homomorphism $\kappa:C\to C_\Phi$ given by $\kappa(c)=(p(c),[\varphi^+](c),[\varphi^-](c))$ (as $\varphi^+$ and $\varphi^-$ are equal on $J$, $r([\varphi^+](c))=r([\varphi^-](c))$), so $\kappa(c)\in C_\Phi$. As $p$ is surjective and $[\varphi^\pm]$ are surjective too, by the definition of $\underline{E}_\pm$, $\kappa$ is surjective. Thus, $[\varphi^\pm]=\kappa\circ p_\pm$, hence, passing to their lifts, $\varphi^\pm$ is asymptotically equivalent to $\psi^\pm=\sigma\circ\kappa\circ p_\pm$, where $\sigma:\underline{E}\to E$ is a continuous lift for $Q$. But $\delta\circ\gamma(\varphi^+,\varphi^-)=(\psi^+,\psi^-)$.

\end{proof}

\section{Full surjections}

\begin{defn}
Let $C\in\mathcal E_A$. We call it {\it full} if the canonical map $EN(C,J;B)\to EN(A,B)$ is an isomorphism for any $B$. 

\end{defn}

Note that $\mathcal E_A$ has the minimal element $A$. For this minimal element, $EN(A,0;B)$ is the semigroup of homotopy classes of pairs of asymptotic homomorphisms from $A$ to $B$, hence $EN(A,0;B)\cong [[A,B]]\oplus[[A,B]]$, and thus $A$  very rarely is full. 

\begin{lem}
The cone extension $CA$ and the universal extension are full.

\end{lem}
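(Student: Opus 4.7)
For the universal extension $QA$ (Cuntz's construction from \cite{Cuntz2}), the argument is essentially immediate: $QA$ is the \emph{maximum} element of the directed poset $\mathcal{E}_A$, meaning every $C\in\mathcal{E}_A$ receives a surjective $*$-homomorphism $QA\twoheadrightarrow C$ compatible with the projections to $A$. The directed system $\{EN(C,J_C;B)\}_{C\in\mathcal{E}_A}$ then has $EN(QA,qA;B)$ as a terminal object, and every transition map $EN(C,J_C;B)\to EN(C',J_{C'};B)$ factors through $EN(QA,qA;B)$. Consequently $EN(A,B)=\varinjlim EN(C,J_C;B)=EN(QA,qA;B)$ tautologically, and the canonical map is the identity.

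For the cone $CA$ with surjection $\ev_1\colon CA\to A$ and kernel $SA$, the plan is to exploit the isomorphism $\gamma\colon EN(A,B)\to EM(A,B)$ proved in Section 5. Writing $\iota\colon EN(CA,SA;B)\to EN(A,B)$ for the canonical map, the composition $\gamma\circ\iota$ evaluates any class by picking the obvious $\mathbb C$-homogeneous $*$-preserving section $s\colon A\to CA$, $s(a)=t\otimes a$, and setting $\gamma(\iota(\psi^+,\psi^-))=(\psi^+\circ s,\psi^-\circ s)$. Since $\gamma$ is an isomorphism, showing $\iota$ is an isomorphism is equivalent to showing $\gamma\circ\iota\colon EN(CA,SA;B)\to EM(A,B)$ is an isomorphism. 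I will produce an explicit two-sided inverse.

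Given $(\Phi^+,\Phi^-)\in EM(A,B)$, let $D\subset E=C^*(\Phi^\pm(A))$ denote the ideal generated by $\Phi^+(a)-\Phi^-(a)$, and choose a quasi-central approximate unit $(v_r)$ of $D$ in $E$ with a reparametrisation as in Lemma~\ref{reparam0} so that $v=v_{r(t)}\in C_b(\mathbb R_+;B)$ satisfies $[v,\Phi^\pm(a)]\sim 0$ and $vx\sim x$ for $x\in D$. Define $\psi^\pm\colon CA\to C_b(\mathbb R_+;B)$ on the generating tensors $f\otimes a\in C_0((0,1])\otimes A\cong CA$ by
\[
\psi^\pm(f\otimes a)=f(v)\,\Phi^\pm(a).
\]
Quasi-centrality and $C^*$-estimates give asymptotic additivity automatically, and the multiplicative deficiency reduces to $f(v)g(v)M^\pm(a,b)$, which is $\sim 0$ because $M^\pm(a,b)\cdot v\sim 0$ by (a2) (since $v\in D$) and $f(0)g(0)=0$. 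Thus $\psi^\pm$ are asymptotic homomorphisms. For $f\in C_0((0,1))$, $\psi^+(f\otimes a)-\psi^-(f\otimes a)=f(v)(\Phi^+(a)-\Phi^-(a))\sim f(1)(\Phi^+(a)-\Phi^-(a))=0$ since $v$ is an approximate unit for $D$, so the pair agrees on $SA$. This yields $\Psi(\Phi^+,\Phi^-)=(\psi^+,\psi^-)\in EN(CA,SA;B)$, and the computation $\gamma_{CA}(\Psi(\Phi^\pm))=(v\Phi^+,v\Phi^-)$ coincides with $(\Phi^+,\Phi^-)$ in $EM(A,B)$ via the compactification homotopy of the proof of Claim 1 in Theorem~\ref{Lemma7}.

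The hard step will be showing that $\Psi$ is also a left inverse, i.e.\ that $\Psi\circ\gamma_{CA}=\mathrm{id}$ on $EN(CA,SA;B)$. This reduces to two uniqueness statements: (i) different admissible choices of quasi-central approximate unit $v$ give homotopic $\psi^\pm$, via linear interpolation of $v$'s (which remain quasi-central approximate units along the path), and (ii) any asymptotic homomorphism $\psi^\pm\colon CA\to B$ agreeing on $SA$ is asymptotically determined by its values $\psi^\pm(t\otimes a)=(\psi^\pm\circ s)(a)$, because the polynomials in $t$ vanishing at $0$ are dense in $C_0((0,1])$ and asymptotic multiplicativity then forces $\psi^\pm(t^n\otimes a)\sim$ the expected expression in $\psi^\pm(t\otimes a)$ and $\psi^\pm(t\otimes 1_n)$ (via approximate units of $A$). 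Point (ii) is the main technical obstacle, and I expect it to require a careful argument paralleling the one used to prove $\gamma$ is well-defined independently of the choice of section in Section 5, combined with the fact that the ideal $D(\psi^+,\psi^-)\subset C_b(\mathbb R_+;B)$ of the pair $\psi^\pm$ on $CA$ agrees, via $s$, with the ideal $D(\Phi^+,\Phi^-)$ of the image $(\Phi^+,\Phi^-)=\gamma_{CA}(\psi^+,\psi^-)$ up to asymptotic equivalence.
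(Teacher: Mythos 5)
Your treatment of the universal extension matches the paper's one-line argument, and your overall strategy for $CA$ — construct $\delta_{CA}\colon EM(A,B)\to EN(CA,SA;B)$ by $\psi^\pm(f\otimes a)=f(v)\Phi^\pm(a)$ using a quasicentral approximate unit $v$, then show it is a two-sided inverse to $\gamma_{CA}$ — is exactly the paper's approach, including the observation that $\delta_{CA}\circ\gamma_{CA}(\Phi^+,\Phi^-)=(v\Phi^+,v\Phi^-)\simeq(\Phi^+,\Phi^-)$ reduces to Claim 1 of Theorem~\ref{Lemma7}.

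However, there is a genuine gap in the remaining step, which you yourself flag as "the hard step": you do not prove that $\delta_{CA}\circ\gamma_{CA}=\mathrm{id}$ on $EN(CA,SA;B)$; you only state two "uniqueness" assertions (i) and (ii) and say you "expect" them to follow from a careful argument. Moreover, the sketch of (ii) is misleading as a route to the result. If $(\psi^+,\psi^-)\in EN(CA,SA;B)$ and $(\phi^+,\phi^-)=\delta_{CA}\gamma_{CA}(\psi^+,\psi^-)$, then $\phi^\pm(t\otimes a)=v\,\psi^\pm(s(a))$, which is \emph{not} asymptotically equal to $\psi^\pm(t\otimes a)=\psi^\pm(s(a))$ — the compactifying factor $v$ is not removable, and indeed $\psi^\pm(s(a))$ need not even lie in $C_b(\mathbb R_+;B)$ while $v\,\psi^\pm(s(a))$ does. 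So "determined by its values on $s(A)$" does not yield a homotopy between $\psi^\pm$ and $\phi^\pm$; you cannot just invoke density of polynomials vanishing at the endpoint. What is actually needed, and what the paper supplies, is an explicit two-variable homotopy: one passes through the auxiliary asymptotic homomorphisms $\xi^\pm(f\otimes g\otimes a)=f(v)\psi^\pm(g\cdot s(a))$ and $\eta^\pm(f\otimes g\otimes a)=f(v)\psi^\pm(g\otimes a)$ on $C_0$ of a square times $A$, connects them by the linear homotopy $\zeta^\pm_\tau$ on the common domain, and pulls back along the three maps $p_1,p_2,p\colon[0,1]^2\to[0,1]$ (and their linear interpolations) to recover $\phi^\pm$ from $\xi^\pm$ and $\psi^\pm$ from $\eta^\pm$, while keeping agreement on $SA$ throughout. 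That construction, not a uniqueness principle, is the content of the lemma, and it is absent from your proposal.
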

\begin{proof}
For the universal extension the statement follows from its universality, so let us prove that $CA$ is full. Essentially, this is a special case of Theorem 5.3 of \cite{Thomsen}, but we give more details here.

We shall construct a map $\delta_{CA}:EM(A,B)\to EN(CA,SA;B)$ inverse to $\gamma_{CA}$. Given $(\Phi^+,\Phi^-)$ in $EM(A,B)$, let $v\in C_b(\mathbb R_+;B)$ be as before, i.e. let $v$ satisfy
\begin{itemize}
\item[(v1')]
$0\leq v\leq 1$;
\item[(v2')]
$[f(v),\Phi^\pm(a)]\sim 0$ for any $f\in C[0,1]$ and any $a\in A$;
\item[(v3')]
$(f(v)-1)(\Phi^+(a)-\Phi^-(a))\sim 0$ for any $f\in C_0[0,1)$ and any $a\in A$.
\end{itemize}
Define $\psi^\pm:CA\to C_b(\mathbb R_+;B)$ by $\psi^\pm(f\otimes a)=f(v)\Phi^\pm(a)$, where $f\in C_0[0,1)$, $a\in A$.

It is easy to see that $\psi^\pm$ are asymptotic homomorphisms from $CA$ to $B$, and (v3') provides that $\psi^+$ and $\psi^-$ agree on $SA$, i.e. when $f\in C_0(0,1)$. Thus we get a well-defined map $\delta_{CA}:EM(A,B)\to EN(CA,SA;B)$, $\delta_{CA}(\Phi^+,\Phi^-)=(\psi^+,\psi^-)$.

Then $\delta_{CA}\circ\gamma_{CA}(\Phi^+,\Phi^-)=(v\Phi^+,v\Phi^-)$, and, as in the proof of Lemma \ref{Lemma7}, $[v\Phi^+,v\Phi^-]=[\Phi^+,\Phi^-]$, hence $\delta_{CA}\circ\gamma_{CA}=\id$.

Let us show that $\gamma_{CA}\circ\delta_{CA}=\id$. 
Let $(\psi^+,\psi^-)\in EN(Cone(A),SA;B)$, i.e. $\psi^\pm$ are two asymptotic homomorphisms from $Cone(A)$ to $B$ that agree on $SA$. Let $s:A\to CA$ be the map defined by $s(a)(\tau)=\tau a$, $a\in A$, $\tau\in[0,1]$. Then $\gamma_{CA}\circ\delta_{CA}(\psi)=(\phi^+,\phi^-)$, where $\phi^\pm(f\otimes a)=f(v)\psi^\pm(s(a))$, $a\in A$, $f\in C_0[0,1)$. 

Let us show that the pairs $(\psi^+,\psi^-)$ and $(\phi^+,\phi^-)$ of asymptotic homomorphisms are homotopic, i.e. that there exist two homotopies, connecting $\psi^+$ with $\phi^+$, and $\psi^-$ with $\phi^-$, such that they agree on $SA$.

Define asymptotic homomorphisms $\xi^\pm$ (resp. $\eta^\pm$) from $C_0([0,1)\times[0,1])\otimes A$ (resp. from $C_0([0,1]\times[0,1))\otimes A$) to $B$ by
$$
\xi^\pm(f\otimes g\otimes a)=f(v)\psi^\pm(g\cdot s(a)), \quad f\in C_0[0,1), g\in C[0,1], a\in A;
$$  
$$
\eta^\pm(f\otimes g\otimes a)=f(v)\psi^\pm(g\otimes a), \quad f\in C[0,1], g\in C_0[0,1), a\in A.
$$  

Note that on the common domain $C_0([0,1)^2\otimes A)$, these asymptotic homomorphisms are homotopic via the linear homotopy
$$
\zeta^\pm_\tau(f\otimes g\otimes a)\to f(v)\psi^\pm(\tau g\otimes a+ (1-\tau)g\cdot s(a)),\quad f,g\in C_0[0,1), \tau\in[0,1],
$$
and if $g\in C_0(0,1)$ then the two maps $\zeta_\tau^+$ and $\zeta_\tau^-$ agree.

Let $(x,y)$ be coordinates in $[0,1]^2$, and let $p_1,p_2,p:[0,1]^2\to[0,1]$ be defined by $p_1(x,y)=x$, $p_2(x,y)=y$, $p(x,y)=\left\lbrace\begin{array}{cc}x+y&\mbox{\ if\ }x+y\leq 1;\\1&\mbox{\ if\ }x+y\geq 1.\end{array}\right.$   
There are obvious linear homotopies $p_1(\tau)$ and $p_2(\tau)$, $\tau\in[0,1$, connecting $p_1$ and $p_2$ with $p$. 
By $q^*$ we denote the $*$-homomorphism $C[0,1)\otimes A\to C[0,1]^2\otimes A$ induced by a map $q:[0,1]^2\to[0,1]$.

Note that $p_1^*\circ\xi^\pm(f\otimes a)=f(v)\psi^\pm(s(a))=\varphi^\pm(f\otimes a)$, and $\Im p_1^*=C_0([0,1)\times[0,1])$. The maps $p_1^*\circ\xi^\pm$ and $p_2^*\circ\xi^\pm$ are homotopic via the homotopy $p_1^*(\tau)\circ\xi^\pm$, $\tau\in[0,1]$, and $p_1^*(\tau)\circ\xi^+$ and $p_1^*(\tau)\circ\xi^-$ agree on $f\otimes a$ when $f\in C_0(0,1)$, for any $\tau\in[0,1]$. Hence $[(\varphi^+,\varphi^-)]=[(p^*\circ\xi^+,p^*\circ\xi^-)]$.

Similarly, $p_2^*\circ\eta^\pm(f\otimes a)=\psi^\pm(f\otimes a)$, and $[(\psi^+,\psi^-)]=[(p^*\circ\eta^+,p^*\circ\eta^-)]$.


As $p^*(f\otimes a)\in C_0([0,1)^2)$ for any $f\in C_0[0,1)$, we may use the homotopy $\zeta^\pm$ to connect $p^*\circ\xi^\pm$ with $p^*\circ\eta^\pm$. 

\end{proof}

\begin{lem}
Let $C\in\mathcal E_A$. If there exists a splitting $*$-homomorphism $\lambda:A\to C$ and if $A\in\mathcal E_A$ is not full then $C$ is not full.

\end{lem}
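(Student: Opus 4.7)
The plan is to prove the contrapositive: assuming $C$ is full, I will derive that $A$ is also full, contradicting the hypothesis. The splitting $\lambda:A\to C$ satisfies $p\circ\lambda=\id_A$, hence $\lambda^{-1}(J)=\{0\}$, and composition with $\lambda$ defines a semigroup map $\lambda^*:EN(C,J;B)\to EN(A,0;B)$ by $(\psi^+,\psi^-)\mapsto(\psi^+\circ\lambda,\psi^-\circ\lambda)$. Together with the structure map $p^*:EN(A,0;B)\to EN(C,J;B)$ of the direct system, we have $\lambda^*\circ p^*=(p\circ\lambda)^*=\id$ and $\gamma_A=\gamma_C\circ p^*$. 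The proof then hinges on the matching identity $\gamma_C=\gamma_A\circ\lambda^*$: granted this, if $\gamma_C$ is an isomorphism, then $\gamma_A$ is surjective (since $\im(\gamma_A)\supseteq\im(\gamma_A\circ\lambda^*)=\im(\gamma_C)$) and injective (as a composition of two injections), hence an isomorphism, so $A$ is full.

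To establish $\gamma_C=\gamma_A\circ\lambda^*$, I would introduce the fibered mapping-path algebra
\[
C''=\{(c,f)\in C\oplus C([0,1];C):f(0)=c,\ f(1)=\lambda(p(c)),\ p(f(t))=p(c)\text{ for all }t\in[0,1]\},
\]
a separable $C^*$-subalgebra of $C\oplus C([0,1];C)$ (the fiber condition $p\circ f\equiv p(c)$ is preserved under products and involution). The linear path $f(t)=(1-t)c+t\lambda(p(c))$ shows that $\ev_0:C''\to C$, $(c,f)\mapsto c$, is a surjection, and since $p\circ\ev_0:C''\to A$ is then also a surjection, $C''\in\mathcal{E}_A$ with $C''\geq C\geq A$. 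Given $(\psi^+,\psi^-)\in EN(C,J;B)$, define
\[
\Psi^\pm:C''\to C_b(\mathbb{R}_+;IB),\qquad \Psi^\pm_s(c,f)(t)=\psi^\pm_s(f(t)).
\]
Since each path-image $\{f(t):t\in[0,1]\}$ is compact in $C$ and the asymptotic-homomorphism defect of $\psi^\pm$ vanishes uniformly on compact subsets, $\Psi^\pm$ is a genuine asymptotic homomorphism. The fiber condition forces $f(t)\in J$ whenever $(c,f)\in J''=\ker(p\circ\ev_0)$, so $\Psi^+$ and $\Psi^-$ agree on $J''$, giving $(\Psi^+,\Psi^-)\in EN(C'',J'';IB)$. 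Evaluating at $t=0$ recovers $\ev_0^*(\psi^+,\psi^-)$, and at $t=1$ recovers $(\lambda\circ p\circ\ev_0)^*(\psi^+,\psi^-)=\ev_0^*\circ p^*\circ\lambda^*(\psi^+,\psi^-)$, so these two elements of $EN(C'',J'';B)$ become equal there. Applying $\gamma_{C''}$ and using the factorizations $\gamma_{C''}\circ\ev_0^*=\gamma_C$ and $\gamma_{C''}\circ\ev_0^*\circ p^*=\gamma_A$ then yields $\gamma_C(\psi^+,\psi^-)=\gamma_A\circ\lambda^*(\psi^+,\psi^-)$.

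The main obstacle is the technical verification that $\Psi^\pm$ is genuinely an asymptotic homomorphism uniformly in $t\in[0,1]$, and that the pair satisfies the required agreement on $J''$ in the norm of $C_b(\mathbb{R}_+;IB)$ rather than merely pointwise in $t$. Both facts follow from the standard uniform-on-compact-subsets continuity property of asymptotic homomorphisms applied to the compact sets $\{f(t):t\in[0,1]\}\subset C$ (respectively $\subset J$), but should be spelled out to confirm that the constructed $(\Psi^+,\Psi^-)$ lies in the required class.
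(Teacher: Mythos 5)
Your argument is correct, and the logical skeleton (proving the contrapositive, identifying $\gamma_C=\gamma_A\circ\lambda^*$, then combining with $\gamma_A=\gamma_C\circ p^*$ and $\lambda^*\circ p^*=\id$ to conclude $\gamma_A$ is bijective) is exactly the right structure. However, you work far harder than necessary to establish the key identity $\gamma_C=\gamma_A\circ\lambda^*$. The paper's proof of well-definedness of $\gamma_C$ already shows that $\gamma_C(\psi^+,\psi^-)=[(\psi^+\circ s,\psi^-\circ s)]$ is independent of the choice of homogeneous $*$-preserving continuous section $s$, via the linear homotopy $\tau s+(1-\tau)s'$. Since the $*$-homomorphism $\lambda$ is itself such a section, one may take $s=\lambda$, giving $\gamma_C(\psi^+,\psi^-)=[(\psi^+\circ\lambda,\psi^-\circ\lambda)]=\gamma_A\circ\lambda^*(\psi^+,\psi^-)$ in one line. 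Your fibered mapping-path algebra $C''$, the verification that it lies in $\mathcal E_A$, and the uniform-on-compacta estimates needed to see that $\Psi^\pm$ is an asymptotic homomorphism from $C''$ to $IB$ all re-derive this same fact by transplanting the $s\rightsquigarrow\lambda$ homotopy from $EM(A,B)$ into the $EN$-framework at the level of the larger algebra $C''$; the extra step buys nothing here, since the identity is needed only after applying $\gamma$, i.e.\ in $EM(A,B)$, where the paper's homotopy already lives. Put differently: the paper uses the homotopy of sections to compare two descriptions of a single map $\gamma_C$, whereas you re-encode the same homotopy as an extra element of $\mathcal E_A$ sitting above $C$; the former is shorter and is what the author does (the terse phrasing ``$EN(C,J;B)\cong EN(A,0;B)$'' in the paper should be read as a consequence, under the assumption that $C$ is full, of precisely the identities you write out explicitly).
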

\begin{proof}
Any continuous map $s:A\to C$ can be connected to $\lambda$ by a linear homotopy, therefore, any compact asymptotic $KK$-cycle is homotopic to a compact asymptotic $KK$-cycle, which is a pair of $*$-homomorphisms, hence $EN(C,J;B)\cong EN(A,0;B)$.

\end{proof}

\end{document}